\newtheorem{theorem}{Theorem}
\newtheorem{lemma}{Lemma}
\newtheorem{remark}{Remark}
\newtheorem{example}{Example}
\newtheorem{property}{Property}
\newtheorem{definition}{Definition}
\numberwithin{equation}{section}
\begin{document}
\title{A Class of Second Order Difference Approximations for Solving Space Fractional Diffusion Equations}
\author{WenYi Tian,~~Han Zhou,~~Weihua Deng\footnote{Corresponding Author. E-mail: dengwh@lzu.edu.cn}\\[10pt]
        {School of Mathematics and Statistics, Lanzhou University,
         Lanzhou 730000, P. R. China}
       }
\date{}
\maketitle
\begin{abstract}
A class of second order approximations, called the weighted and shifted Gr\"{u}nwald
difference operators, are proposed for Riemann-Liouville fractional derivatives, with
their effective applications to numerically solving space fractional diffusion equations
in one and two dimensions. The stability and convergence of our difference schemes for
space fractional diffusion equations with constant coefficients in one and two dimensions
are theoretically established. Several numerical examples are implemented to testify the
efficiency of the numerical schemes and confirm the convergence order, and the numerical
results for variable coefficients problem are also presented.

\vskip 4pt \textbf{Keywords}: Riemann-Liouville fractional derivative, Fractional
diffusion equation, Weighted and shifted Gr\"{u}nwald difference operator.

\vskip 4pt \textbf{AMS subject classifications}: 26A33, 65L12, 65L20
\end{abstract}
\section{Introduction}
Fractional calculus is a fundamentally mathematical tool for describing some special
phenomenons arising from engineering and science
\cite{Miller:93,Podlubny:99,Schneider:89}. One of its most important applications is to
describe the subdiffusion and superdiffusion process \cite{Chechkin:02,
Krepysheva:06,Negrete:03}. The suitable mathematical models are the diffusion equations
with time and/or space fractional derivatives, where the classical first order derivative
in time is replaced by the Caputo fractional derivative of order $\alpha\in (0, 1)$, and
the second order derivative in space is essentially replaced  by the Riemann-Liouville
fractional derivative of order $\alpha\in (1, 2]$. The physical interpretation and
practical applications of fractional diffusion equations have been discussed a lot with
some common ideas \cite{Barkai:02,Gorenflo:98,Metzler:00}. Based on these, our main
purpose of this paper is to study the higher accurate numerical solution of the space
fractional diffusion equation by a novel finite difference approximation.

From the perspective of the numerical analysis, there are some fundamental difficulties
in numerically approximating the fractional derivatives, because some good properties of
classical approximating operators are lost. Over the last decades, the finite difference
method has some developments in solving the fractional partial differential equations,
e.g., \cite{Celik:11,Meerschaert:04,Meerschaert:06, Yang:10}. The Riemann-Liouville
fractional derivative can be  discretized by the standard Gr\"{u}nwald-Letnikov formula
\cite{Podlubny:99} with only the first order accuracy, but the difference scheme based on
the Gr\"{u}nwald-Letnikov formula for time dependent problems is unstable
\cite{Meerschaert:04}. To overcome this problem, Meerschaert and Tadjeran in
\cite{Meerschaert:04} firstly proposed the shifted Gr\"{u}nwald-Letnikov formula to
approximate fractional advection-dispersion flow equations. Recently, second order
approximations to fractional derivatives are studied, Sousaa and Li presented a second
order discretization for Riemann-Liouville fractional derivative and established an
unconditionally stable weighted average finite difference method for one-dimensional
fractional diffusion equation in \cite{Sousaa:11}, and the results in two-dimensional
two-sided space fractional convection diffusion equation in finite domain can be seen in
\cite{Chen:12}; Ortigueira \cite{Ortigueira:06} gave the ``fractional centred derivative"
to approximate the Riesz fractional derivative with second order accuracy, and this
method was used by \c{C}elik and Duman in \cite{Celik:11} to approximate fractional
diffusion equation with the Riesz fractional derivative in a finite domain. In this
paper, we propose a more general and flexible approach to approximate the
Riemann-Liouville fractional derivative via combining the distinct shifted
Gr\"{u}nwald-Letnikov formulae with their corresponding weights, and the weighted and
shifted Gr\"{u}nwald-Letnikov formulae achieve second and higher order accuracy. A
detailed algorithm shows that the weights are related to not only the shifted numbers but
also the order of the fractional derivative, which implies the numerical algorithm is
more related to the equation itself.

The paper is briefly summarized as follows. In Sec. 2, we propose a class of discrete
operators to approximate the Riemann-Liouville fractional derivatives with high order
truncating errors. In Sec. 3 and 4, one dimensional and two dimensional fractional
diffusion equations are numerically solved by using the finite difference method based on
the weighted and shifted Gr\"{u}nwald-Letnikov formulae, and the stability analysis of
each case is presented. We prove that the finite difference solutions approximate the
exact ones with $O(\tau^2+h^2)$ in the discrete $L^2$ norm. Some numerical experiments
are performed in Sec. 5 to verify the efficiency and accuracy of the methods. And the
concluding remarks are given in the last Section.
\section{High Order Approximations for Riemann-Liouville Fractional Derivatives}
We begin with the definitions of the Riemann-Liouville fractional derivatives and the properties of their Fourier transform.
\begin{definition}[\cite{Podlubny:99}]
The $\alpha\,(n-1<\alpha<n)$ order left and right Riemann-Liouville
fractional derivatives of the function $u(x)$ on $[a,b]$ are defined
as
\begin{itemize}
  \item[(1)] left Riemann-Liouville fractional derivative:
    \begin{equation*}
      _aD_x^{\alpha}u(x)=\frac{1}{\Gamma(n-\alpha)}\frac{\mathrm{d}^n}{\mathrm{d}
          x^n}\int_a^x\frac{u(\xi)}{(x-\xi)^{\alpha-n+1}}\mathrm{d}\xi;
    \end{equation*}
  \item[(2)] right Riemann-Liouville fractional derivative:
    \begin{equation*}
      _xD_b^{\alpha}u(x)=\frac{(-1)^n}{\Gamma(n-\alpha)}\frac{\mathrm{d}^n}{\mathrm{d}
          x^n}\int_x^b\frac{u(\xi)}{(\xi-x)^{\alpha-n+1}}\mathrm{d}\xi.
    \end{equation*}
\end{itemize}
 If $\alpha=n$, then $_aD_x^{\alpha}u(x)=\frac{\mathrm{d}^n}{\mathrm{d}x^n}u(x)$ and $_xD_b^{\alpha}u(x)=(-1)^n\frac{\mathrm{d}^n}{\mathrm{d}x^n}u(x)$.
\end{definition}
\begin{property}[\cite{Ervin:06}]
  Let $\alpha>0,~u\in C_0^{\infty}(\Omega),~\Omega\subset\mathbb{R}$. The Fourier transforms of the left and right Riemann-Liouville fractional derivatives satisfy
  \begin{align*}
    & \mathscr{F}({_{-\infty}}D_x^{\alpha}u(x))=(i\omega)^{\alpha}\hat{u}(\omega), \\
    & \mathscr{F}({_x}D_{\infty}^{\alpha}u(x))=(-i\omega)^{\alpha}\hat{u}(\omega),
  \end{align*}
  where $\hat{u}(\omega)$ denotes the Fourier transform of $u$,
  \begin{equation*}
    \hat{u}(\omega)=\int_{\mathbb{R}}\mathrm{e}^{-i\omega x}u(x)\mathrm{d}x.
  \end{equation*}
\end{property}
In \cite{Meerschaert:04}, the shifted Gr\"{u}nwald difference operator
\begin{equation}\label{eq:2.1}
  A_{h,p}^{\alpha}u(x)=\frac{1}{h^\alpha}\sum_{k=0}^{\infty}g_k^{(\alpha)}u(x-(k-p)h),
\end{equation}
approximates the Riemann-Liouville fractional derivative uniformly
with first order accuracy, i.e.,
\begin{equation}
  A_{h,p}^{\alpha}u(x)={_{-\infty}}D_x^{\alpha}u(x)+O(h),
\end{equation}
where $p$ is an integer and
$g_k^{(\alpha)}=(-1)^k\binom{\alpha}{k}$. In fact, the coefficients
$g_k^{(\alpha)}$ in \eqref{eq:2.1} are the coefficients of the power
series of the function $(1-z)^\alpha$,
\begin{equation}\label{eq:2.2}
  (1-z)^\alpha=\sum_{k=0}^\infty(-1)^k\binom{\alpha}{k}z^k=\sum_{k=0}^\infty g_k^{(\alpha)}z^k,
\end{equation}
for all $|z|\le1$, and they can be evaluated recursively
\begin{equation}
  g_0^{(\alpha)}=1,\quad g_k^{(\alpha)}=\Big(1-\frac{\alpha+1}{k}\Big)g_{k-1}^{(\alpha)}, ~ k=1,2,\ldots.
\end{equation}
\begin{lemma}[\cite{Meerschaert:04,Meerschaert:06,Podlubny:99}]\label{lem:1}
  The coefficients in \eqref{eq:2.1} satisfy the following properties for
  $1<\alpha\le2$,
  \begin{equation}\left\{
    \begin{split}
      & g_0^{(\alpha)}=1,~ g_1^{(\alpha)}=-\alpha<0, \\
      & 1\ge g_2^{(\alpha)}\ge g_3^{(\alpha)}\ge\ldots\ge0,\\
      & \sum_{k=0}^\infty g_k^{(\alpha)}=0,~\sum_{k=0}^m g_k^{(\alpha)}<0,~m\ge1.
    \end{split}\right.
  \end{equation}
\end{lemma}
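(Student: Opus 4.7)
The plan is to deduce all four claims from three tools already in hand: the explicit formula $g_k^{(\alpha)} = (-1)^k\binom{\alpha}{k}$, the recursion $g_k^{(\alpha)} = \bigl(1-(\alpha+1)/k\bigr)\,g_{k-1}^{(\alpha)}$, and the generating-function identity \eqref{eq:2.2} evaluated on the closed disc $|z|\le 1$. The first two values are immediate: $g_0^{(\alpha)}=\binom{\alpha}{0}=1$ and $g_1^{(\alpha)}=-\binom{\alpha}{1}=-\alpha$, the sign being strict because $\alpha>1$.

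For the monotone chain $1 \ge g_2^{(\alpha)} \ge g_3^{(\alpha)} \ge \cdots \ge 0$, I would analyze the sign and magnitude of the multiplicative factor $r_k := 1 - (\alpha+1)/k$. The index $k=2$ is exceptional since $r_2 = (1-\alpha)/2 < 0$; combined with $g_1^{(\alpha)}<0$ this gives $g_2^{(\alpha)} = \alpha(\alpha-1)/2$, which lies in $(0,1]$ on $\alpha\in(1,2]$ with the upper value attained at $\alpha=2$. For $k\ge 3$ and $\alpha\in(1,2]$, the bound $(\alpha+1)/k \le 3/k \le 1$ places $r_k$ in $[0,1)$, and a one-step induction propagates both non-negativity and the pointwise decrease starting from $g_2^{(\alpha)}$.

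The sum identities come from \eqref{eq:2.2}. Evaluating the power series at $z=1$ yields $\sum_{k=0}^\infty g_k^{(\alpha)} = (1-1)^\alpha = 0$; to make this rigorous I would note that the asymptotic $g_k^{(\alpha)} = O(k^{-\alpha-1})$, easily read off the recursion, guarantees absolute convergence at $z=1$ for $\alpha>1$ (an appeal to Abel's theorem would also do). The partial-sum inequality then reduces to $\sum_{k=0}^m g_k^{(\alpha)} = -\sum_{k=m+1}^\infty g_k^{(\alpha)}$ combined with the non-negativity of the tail established in the previous step.

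The one subtle point I anticipate is the strictness in the partial-sum inequality: one needs at least one $g_k^{(\alpha)}>0$ with $k>m$. This is automatic on the open range $\alpha\in(1,2)$, where $r_k>0$ for every $k\ge 3$ forces all tail terms to be strictly positive. The boundary value $\alpha=2$ collapses the tail ($g_k^{(2)}=0$ for $k\ge 3$) and would require a brief separate check, since the useful content of the lemma in that degenerate case is limited to $m=1$.
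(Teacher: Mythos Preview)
The paper does not give its own proof of this lemma; it is stated with citations to \cite{Meerschaert:04,Meerschaert:06,Podlubny:99} and used as a black box. Your argument is therefore not being compared against anything in the paper, but it is essentially correct and self-contained: the recursion factor $r_k=1-(\alpha+1)/k$ cleanly controls sign and monotonicity for $k\ge 2$, and the generating-function evaluation at $z=1$ (justified by the $O(k^{-\alpha-1})$ asymptotic) gives the infinite-sum identity from which the partial-sum bound follows by tail positivity.

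Your observation about the boundary $\alpha=2$ is accurate and worth stating explicitly rather than leaving as a ``separate check'': in that case $g_0^{(2)}=1$, $g_1^{(2)}=-2$, $g_2^{(2)}=1$, and $g_k^{(2)}=0$ for $k\ge 3$, so $\sum_{k=0}^m g_k^{(2)}=0$ for every $m\ge 2$ and the strict inequality in the lemma fails there. This is a minor imprecision in the lemma as stated (and harmless for the paper's later use, which only needs the non-strict inequality or the open range $1<\alpha<2$), but your proposal is right to flag it rather than paper over it.
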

\subsection{Second Order Approximations}
Inspired by the shifted Gr\"{u}nwald difference operator \eqref{eq:2.1} and multi-step
method, we derive the following second order approximation for the Riemann-Liouville
fractional derivatives.
\begin{theorem}\label{thm:1}
  Let $u\in L^1(\mathbb{R})$, ${_{-\infty}}D_x^{\alpha+2}u$ and its Fourier transform belong to $L^1(\mathbb{R})$, and define the weighted and shifted Gr\"{u}nwald difference (WSGD) operator by
  \begin{equation}\label{eq:2.3}
    {_L}\mathcal{D}_{h,p,q}^{\alpha}u(x)=\frac{\alpha-2q}{2(p-q)}A_{h,p}^{\alpha}u(x)
    +\frac{2p-\alpha}{2(p-q)}A_{h,q}^{\alpha}u(x),
  \end{equation}
  then we have
  \begin{equation}
    {_L}\mathcal{D}_{h,p,q}^{\alpha}u(x)={_{-\infty}}D_x^{\alpha}u(x)+O(h^2)
  \end{equation}
  uniformly for $x\in\mathbb{R}$, where $p,q$ are integers and $p\neq q$.
\end{theorem}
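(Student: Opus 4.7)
The natural strategy is Fourier analysis, exactly paralleling how Meerschaert--Tadjeran established first order accuracy of $A_{h,p}^{\alpha}$. First I would compute the Fourier symbol of the shifted Grünwald operator. Using the power series identity \eqref{eq:2.2}, a direct calculation gives
\begin{equation*}
  \mathscr{F}\bigl(A_{h,p}^{\alpha}u\bigr)(\omega)
  \;=\; \frac{e^{i\omega h p}(1-e^{-i\omega h})^{\alpha}}{h^{\alpha}}\,\hat{u}(\omega)
  \;=\; (i\omega)^{\alpha}\,W_{p}(i\omega h)\,\hat{u}(\omega),
\end{equation*}
where $W_{p}(z) := e^{pz}\bigl(\frac{1-e^{-z}}{z}\bigr)^{\alpha}$ is analytic in a neighborhood of $z=0$ with the expansion $W_{p}(z) = 1 + (p-\tfrac{\alpha}{2})z + O(z^{2})$.

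Next, forming the weighted combination \eqref{eq:2.3} and collecting the Fourier symbol, I obtain
\begin{equation*}
  \mathscr{F}\bigl({_L}\mathcal{D}_{h,p,q}^{\alpha}u\bigr)(\omega)
  \;=\; (i\omega)^{\alpha}\,\hat{u}(\omega)\,\Bigl[\tfrac{\alpha-2q}{2(p-q)}W_{p}(i\omega h)
  + \tfrac{2p-\alpha}{2(p-q)}W_{q}(i\omega h)\Bigr].
\end{equation*}
The two weights sum to $1$, so the constant terms reproduce $1$. The linear coefficient of the bracket equals $\frac{(\alpha-2q)(p-\alpha/2)+(2p-\alpha)(q-\alpha/2)}{2(p-q)}$, which a short algebraic simplification shows is identically zero. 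Hence the bracket equals $1 + z^{2}\,\Phi(z)$ at $z=i\omega h$ for an analytic $\Phi$, and the weights have been engineered precisely to kill the first order defect.

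Finally, I would invert the Fourier transform to write, using Property~1,
\begin{equation*}
  {_L}\mathcal{D}_{h,p,q}^{\alpha}u(x) - {_{-\infty}}D_{x}^{\alpha}u(x)
  \;=\; \frac{h^{2}}{2\pi}\int_{\mathbb{R}} e^{i\omega x}\,(i\omega)^{\alpha+2}\,\hat{u}(\omega)\,\widetilde{\Phi}(i\omega h)\,d\omega,
\end{equation*}
with $\widetilde{\Phi}$ the rescaled remainder factor, for which I need a uniform bound $|\widetilde{\Phi}(i\omega h)|\le C$ on all of $\mathbb{R}$. Taking absolute values and invoking the hypothesis that $(i\omega)^{\alpha+2}\hat u(\omega) = \mathscr{F}({_{-\infty}}D_{x}^{\alpha+2}u)(\omega)\in L^{1}(\mathbb{R})$ yields the uniform $O(h^{2})$ estimate in $x$.

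The main obstacle is the last step: the Taylor remainder argument gives boundedness of $\widetilde{\Phi}$ only near $\omega h=0$, while for large $|\omega h|$ the factors $e^{p\,i\omega h}$, $\bigl(\frac{1-e^{-i\omega h}}{i\omega h}\bigr)^{\alpha}$ remain bounded but the division by $(i\omega h)^{2}$ in $\widetilde{\Phi}$ must be justified. The fix is to control $\widetilde{\Phi}$ separately on $|\omega h|\le \delta$ (by analyticity) and on $|\omega h|\ge \delta$ (where $W_{p},W_{q}$ are each bounded so that $z^{-2}[\cdot]$ is at worst bounded times $|\omega h|^{-2}$, absorbed by the extra $(i\omega)^{2}$ in the integrand using $|\omega|^{\alpha+2}h^{2}\le C|\omega|^{\alpha}$ on that region). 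Once this bookkeeping is settled, the conclusion is immediate.
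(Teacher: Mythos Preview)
Your approach is essentially that of the paper: compute the Fourier symbol of the WSGD operator via \eqref{eq:2.2}, expand $W_r(z)=1+(r-\tfrac{\alpha}{2})z+O(z^2)$, observe that the weights are chosen precisely so that the $O(z)$ terms cancel, and then bound the inverse transform using the $L^1$ hypothesis on $\mathscr{F}[{_{-\infty}}D_x^{\alpha+2}u]$. The paper simply asserts the uniform estimate $|\hat\phi(\omega,h)|\le Ch^2|i\omega|^{\alpha+2}|\hat u(\omega)|$ without further comment, so your proof is in fact more careful on this point than the original.

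That said, your proposed fix on the region $|\omega h|\ge\delta$ is not quite right: the inequality $|\omega|^{\alpha+2}h^2\le C|\omega|^\alpha$ goes the wrong way there (since $|\omega h|^2\ge\delta^2$), and as written your argument would lose the factor $h^2$. The cleaner route is to note that on the imaginary axis $z=it$ one has $|W_r(it)|=\bigl|2\sin(t/2)/t\bigr|^\alpha\le 1$, so $S(t):=\lambda_1 W_p(it)+\lambda_2 W_q(it)$ is uniformly bounded; hence for $|t|\ge 1$ one has $|S(t)-1|\le |\lambda_1|+|\lambda_2|+1\le C\le C t^2$, which combined with your Taylor estimate on $|t|\le 1$ gives $|S(\omega h)-1|\le C(\omega h)^2$ for all $\omega h\in\mathbb{R}$, and the paper's bound follows at once.
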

{\em Note.}  The role of $p$ and $q$ is symmetric, i.e.,
${_L}\mathcal{D}_{h,p,q}^{\alpha}u(x)={_L}\mathcal{D}_{h,q,p}^{\alpha}u(x).$
\begin{proof} [Proof of Theorem 1]
  By the definition of $A_{h,p}^{\alpha}$ in \eqref{eq:2.1}, we can rewrite the WSGD operator as
  \begin{equation}\label{eq:2.4}
     {_L}\mathcal{D}_{h,p,q}^{\alpha}u(x)=\frac{\alpha-2q}{2(p-q)}\frac{1}{h^\alpha}
     \sum_{k=0}^{\infty}g_k^{(\alpha)}u(x-(k-p)h)+
     \frac{2p-\alpha}{2(p-q)}\frac{1}{h^\alpha}\sum_{k=0}^{\infty}g_k^{(\alpha)}u(x-(k-q)h).
  \end{equation}
  Taking Fourier transform on \eqref{eq:2.4}, we obtain
  \begin{equation}
    \begin{split}\label{eq:2.12}
      \mathscr{F}[{_L}\mathcal{D}_{h,p,q}^{\alpha}u](\omega) & =\frac{1}{h^\alpha}
      \sum_{k=0}^{\infty}g_k^{(\alpha)}\Big(\frac{\alpha-2q}{2(p-q)}\mathrm{e}^{-i\omega(k-p)h}+
      \frac{2p-\alpha}{2(p-q)}\mathrm{e}^{-i\omega(k-q)h}\Big)\hat{u}(\omega) \\
      & = \frac{1}{h^\alpha}\Big(\frac{\alpha-2q}{2(p-q)}(1-\mathrm{e}^{-i\omega h})^\alpha
        \mathrm{e}^{i\omega hp}+\frac{2p-\alpha}{2(p-q)}(1-\mathrm{e}^{-i\omega h})^\alpha
        \mathrm{e}^{i\omega hq}\Big)\hat{u}(\omega)\\
      & = (i\omega)^\alpha\Big(\frac{\alpha-2q}{2(p-q)}W_p(i\omega h)
        +\frac{2p-\alpha}{2(p-q)}W_q(i\omega h)\Big)\hat{u}(\omega),
    \end{split}
  \end{equation}
  where
  \begin{equation}\label{eq:2.13}
    W_r(z)=\Big(\frac{1-\mathrm{e}^{-z}}{z}\Big)^\alpha\mathrm{e}^{rz}=1+(r-\frac{\alpha}{2})z+O(z^2),~r=p,q.
  \end{equation}
  Denoting $\hat{\phi}(\omega,h)=\mathscr{F}[{_L}\mathcal{D}_{h,p,q}^{\alpha}u](\omega)-\mathscr{F}[{_{-\infty}}D_x^{\alpha}u](\omega)$, then from \eqref{eq:2.12} and \eqref{eq:2.13}
  there exists
  \begin{equation}
    |\hat{\phi}(\omega,h)|\le Ch^2|i\omega|^{\alpha+2}|\hat{u}(\omega)|.
  \end{equation}
  With the condition $\mathscr{F}[{_{-\infty}}D_x^{\alpha+2}u](\omega)\in L^1(\mathbb{R})$, it yields
  \begin{equation}
    |{_L}\mathcal{D}_{h,p,q}^{\alpha}u-{_{-\infty}}D_x^{\alpha}u|=|\phi|\le
    \frac{1}{2\pi}\int_{\mathbb{R}}|\hat{\phi}(\omega,h)|
    \le C\|\mathscr{F}[{_{-\infty}}D_x^{\alpha+2}u](\omega)\|_{L^1}h^2=O(h^2).
  \end{equation}
\end{proof}
\begin{remark}
  For the right Riemann-Liouville fractional derivative, similar to
  Theorem \ref{thm:1}, we can check that
  \begin{equation}\label{eq:2.5}
    {_R}\mathcal{D}_{h,p,q}^{\alpha}u(x)=\frac{\alpha-2q}{2(p-q)}B_{h,p}^{\alpha}u(x)
    +\frac{2p-\alpha}{2(p-q)}B_{h,q}^{\alpha}f(x)={_x}D_{\infty}^{\alpha}u(x)+O(h^2),
  \end{equation}
  uniformly for $x\in\mathbb{R}$ under the conditions that $u\in L^1(\mathbb{R})$, ${_x}D_{\infty}^{\alpha+2}u$ and its Fourier transform belong to $L^1(\mathbb{R})$, where $p,q$ are integers and
  \begin{equation}\label{eq:2.14}
    B_{h,r}^{\alpha}u(x)=\frac{1}{h^\alpha}\sum_{k=0}^{\infty}g_k^{(\alpha)}u(x+(k-r)h).
  \end{equation}
\end{remark}
\begin{remark}
  Considering a well defined function $u(x)$ on the bounded interval $[a,b]$, if $u(a)=0$ or $u(b)=0$, the function $u(x)$ can be zero extended for $x<a$ or $x>b$. And then the $\alpha$ order left and right Riemann-Liouville fractional derivatives of $u(x)$ at each point $x$ can be approximated by the WSGD operators with second order accuracy
  \begin{equation}
    \begin{split}\label{eq:2.6}
      & _aD_x^{\alpha}u(x)=\frac{\lambda_1}{h^\alpha}\sum_{k=0}^{[\frac{x-a}{h}]+p}g_k^{(\alpha)}u(x-(k-p)h)
      +\frac{\lambda_2}{h^\alpha}\sum_{k=0}^{[\frac{x-a}{h}]+q}g_k^{(\alpha)}u(x-(k-q)h)+O(h^2), \\
      & _xD_b^{\alpha}u(x)=\frac{\lambda_1}{h^\alpha}\sum_{k=0}^{[\frac{b-x}{h}]+p}g_k^{(\alpha)}u(x+(k-p)h)
      +\frac{\lambda_2}{h^\alpha}\sum_{k=0}^{[\frac{b-x}{h}]+q}g_k^{(\alpha)}u(x+(k-q)h)+O(h^2),
    \end{split}
  \end{equation}
  where $\lambda_1=\frac{\alpha-2q}{2(p-q)},~\lambda_2=\frac{2p-\alpha}{2(p-q)}$.
\end{remark}
\begin{remark}\label{rem:3}
  The integers $p,q$ are the numbers of the points located on the right/left hand of the point $x$ used for evaluating the $\alpha$ order left/right Riemann-Liouville fractional derivatives at $x$,
  thus, when employing the difference method with \eqref{eq:2.6} for approximating non-periodic fractional differential equations on bounded interval,
  $p,q$ should be chosen satisfying $|p|\le1,|q|\le1$ to ensure that the nodes at which the values of $u$ needed in \eqref{eq:2.6} are within the bounded interval; otherwise, we need to use another way to discretize the fractional derivative when $x$ is close to the
  right/left boundary. When $(p,q)=(0,-1)$, the approximation method turns out to be unstable for time dependent problems. So two sets of $(p,q)$ can be selected to establish the difference scheme for fractional diffusion equations, that is $(1,0)$, $(1,-1)$, and the corresponding weights in \eqref{eq:2.3} and \eqref{eq:2.5} are $(\frac{\alpha}{2},\frac{2-\alpha}{2})$ and $(\frac{2+\alpha}{4},\frac{2-\alpha}{4})$.
  For $\alpha=2$, the WSGD operator \eqref{eq:2.3} is the centered difference approximation of second order derivative when $(p,q)$ equals to $(1,0)$ or $(1,-1)$;
  for $\alpha=1,~(p,q)=(1,0)$, the centered difference scheme for first order derivative is recovered.
\end{remark}
The simplified forms of the discreted approximations \eqref{eq:2.6} for Riemann-Liouville fractional derivatives with $(p,q)=(1,0)$, $(1,-1)$ are
\begin{equation}
  \begin{split}\label{eq:2.7}
    & _aD_x^{\alpha}u(x_i)=\frac{1}{h^\alpha}\sum_{k=0}^{i+1}w_k^{(\alpha)}u(x_{i-k+1})+O(h^2), \\
    & _xD_b^{\alpha}u(x_i)=\frac{1}{h^\alpha}\sum_{k=0}^{N-i+1}w_k^{(\alpha)}u(x_{i+k-1})+O(h^2),
  \end{split}
\end{equation}
where
\begin{equation}\left\{
  \begin{split}\label{eq:2.8}
    (p,q)=(1,0), \quad & w_0^{(\alpha)}=\frac{\alpha}{2}g_0^{(\alpha)},~ w_k^{(\alpha)}=\frac{\alpha}{2}g_k^{(\alpha)}
    +\frac{2-\alpha}{2}g_{k-1}^{(\alpha)},~k\ge1;\\
    (p,q)=(1,-1), \quad & w_0^{(\alpha)}=\frac{2+\alpha}{4}g_0^{(\alpha)},~w_1^{(\alpha)}=\frac{2+\alpha}{4}g_1^{(\alpha)},\\
    & w_k^{(\alpha)}=\frac{2+\alpha}{4}g_k^{(\alpha)}
    +\frac{2-\alpha}{4}g_{k-2}^{(\alpha)},~k\ge2.
  \end{split}\right.
\end{equation}
With Lemma \ref{lem:1} and some calculations, we obtain the
properties of the coefficients $w_k^{(\alpha)}$ in \eqref{eq:2.7}
corresponding to $(p,q)=(1,0),(1,-1)$ as follows.
\begin{lemma}\label{lem:2}
  The coefficients in \eqref{eq:2.7} satisfy the following properties for
  $1<\alpha\le2$,
  \begin{itemize}
    \item[(1)] if $(p,q)=(1,0)$,
      \begin{equation}\left\{
        \begin{split}
          & w_0^{(\alpha)}=\frac{\alpha}{2},~ w_1^{(\alpha)}=\frac{2-\alpha-\alpha^2}{2}<0, ~
            w_2^{(\alpha)}=\frac{\alpha(\alpha^2+\alpha-4)}{4},\\
          & 1\ge w_0^{(\alpha)}\ge w_3^{(\alpha)}\ge w_4^{(\alpha)}\ge\ldots\ge0,\\
          & \sum_{k=0}^\infty w_k^{(\alpha)}=0,~\sum_{k=0}^m
          w_k^{(\alpha)}<0,~m\ge2;
        \end{split}\right.
      \end{equation}
    \item[(2)] if $(p,q)=(1,-1)$,
      \begin{equation}\left\{
        \begin{split}
          & w_0^{(\alpha)}=\frac{2+\alpha}{4},~ w_1^{(\alpha)}=-\frac{2\alpha+\alpha^2}{4}<0, \\
          & w_2^{(\alpha)}=\frac{\alpha^3+\alpha^2-4\alpha+4}{8}>0,~
            w_3^{(\alpha)}=\frac{\alpha(2-\alpha)(\alpha^2+\alpha-8)}{6}\le0,\\
          & 1\ge w_0^{(\alpha)}\ge w_2^{(\alpha)}\ge w_4^{(\alpha)}\ge w_5^{(\alpha)}\ge\ldots\ge0,\\
          & \sum_{k=0}^\infty w_k^{(\alpha)}=0,~\sum_{k=0}^m w_k^{(\alpha)}<0,~m=1\,\,{\rm or }\,\, m\ge3.
        \end{split}\right.
      \end{equation}
  \end{itemize}
\end{lemma}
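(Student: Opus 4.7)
The plan is to derive everything from the defining relations in \eqref{eq:2.8} together with the properties of $g_k^{(\alpha)}$ provided by Lemma \ref{lem:1}. First, I would compute the closed forms of $w_0^{(\alpha)},w_1^{(\alpha)},w_2^{(\alpha)},w_3^{(\alpha)}$ in each case by direct substitution, using the explicit values $g_0^{(\alpha)}=1$, $g_1^{(\alpha)}=-\alpha$, $g_2^{(\alpha)}=\frac{\alpha(\alpha-1)}{2}$, $g_3^{(\alpha)}=\frac{\alpha(\alpha-1)(2-\alpha)}{6}$ obtained from the recurrence $g_k^{(\alpha)}=(1-\frac{\alpha+1}{k})g_{k-1}^{(\alpha)}$. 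The signs of these quantities on $1<\alpha\le2$ then reduce to elementary polynomial inequalities in $\alpha$ (e.g., $\alpha^{2}+\alpha-2>0$ for $w_{1}^{(\alpha)}<0$ in case $(1,0)$; the cubic $\alpha^{3}+\alpha^{2}-4\alpha+4$ is positive on $[1,2]$ for $w_{2}^{(\alpha)}>0$ in case $(1,-1)$; $\alpha^{2}+\alpha-8<0$ on $(1,2]$ for $w_{3}^{(\alpha)}\le0$ in case $(1,-1)$).

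For the monotonicity chains, the key observation is that each $w_k^{(\alpha)}$ for sufficiently large $k$ is a nonnegative combination of two entries of the already-monotone tail of $\{g_k^{(\alpha)}\}$. In case $(1,0)$, for $k\ge3$ I would write
\begin{equation*}
w_{k+1}^{(\alpha)}-w_k^{(\alpha)}=\tfrac{\alpha}{2}\bigl(g_{k+1}^{(\alpha)}-g_k^{(\alpha)}\bigr)+\tfrac{2-\alpha}{2}\bigl(g_k^{(\alpha)}-g_{k-1}^{(\alpha)}\bigr)\le0,
\end{equation*}
since both differences are nonpositive by Lemma \ref{lem:1}; nonnegativity of $w_k^{(\alpha)}$ for $k\ge3$ follows because $g_k^{(\alpha)},g_{k-1}^{(\alpha)}\ge0$ and the coefficients $\alpha/2,(2-\alpha)/2$ are nonnegative. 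The analogous argument in case $(1,-1)$ uses the skip-one combination $w_k^{(\alpha)}=\frac{2+\alpha}{4}g_k^{(\alpha)}+\frac{2-\alpha}{4}g_{k-2}^{(\alpha)}$ and applies for $k\ge4$, which accounts for the jump $w_0^{(\alpha)}\ge w_2^{(\alpha)}\ge w_4^{(\alpha)}$ before the monotone tail begins. The remaining ``boundary'' inequalities $w_0^{(\alpha)}\ge w_3^{(\alpha)}$ (case $(1,0)$) and $w_0^{(\alpha)}\ge w_2^{(\alpha)}\ge w_4^{(\alpha)}$ (case $(1,-1)$) I would verify from the closed forms; all reduce to polynomial inequalities in $\alpha$ that can be checked on $(1,2]$.

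The sum identities are immediate from the generating-function viewpoint. The series $\sum_k w_k^{(\alpha)} z^k$ factors respectively as $\frac{1}{2}(1-z)^\alpha[\alpha+(2-\alpha)z]$ and $\frac{1}{4}(1-z)^\alpha[(2+\alpha)+(2-\alpha)z^2]$, so both vanish at $z=1$, giving $\sum_{k=0}^\infty w_k^{(\alpha)}=0$. For the partial sums, writing $S_m=\sum_{k=0}^m g_k^{(\alpha)}$, a reindexing yields
\begin{equation*}
\sum_{k=0}^m w_k^{(\alpha)}=\tfrac{\alpha}{2}S_m+\tfrac{2-\alpha}{2}S_{m-1}\quad\text{and}\quad \sum_{k=0}^m w_k^{(\alpha)}=\tfrac{2+\alpha}{4}S_m+\tfrac{2-\alpha}{4}S_{m-2},
\end{equation*}
respectively, where $S_j<0$ for $j\ge1$ by Lemma \ref{lem:1}. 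This immediately delivers $\sum_{k=0}^m w_k^{(\alpha)}<0$ for $m\ge2$ in case $(1,0)$; in case $(1,-1)$ the same conclusion follows when $m\ge3$ (so $m-2\ge1$), while the isolated case $m=1$ is handled directly from $w_0^{(\alpha)}+w_1^{(\alpha)}=\frac{2+\alpha}{4}(1-\alpha)<0$.

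I expect the main obstacle to be the short ``boundary'' comparisons---$w_0^{(\alpha)}\ge w_3^{(\alpha)}$ in case $(1,0)$ and especially $w_0^{(\alpha)}\ge w_2^{(\alpha)}\ge w_4^{(\alpha)}$ in case $(1,-1)$---where the structural argument via Lemma \ref{lem:1} does not apply and one must verify non-trivial polynomial inequalities on $(1,2]$. A uniform bound of AM--GM type such as $(\alpha-1)(2-\alpha)\le\frac{1}{4}$, combined with crude estimates for the remaining factors, should handle these, but care is needed because $w_0^{(\alpha)}\ge w_2^{(\alpha)}$ becomes an equality at $\alpha=2$, leaving little slack.
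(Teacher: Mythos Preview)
Your proposal is correct and follows exactly the approach the paper indicates: the paper gives no detailed proof but simply states that the lemma follows ``with Lemma \ref{lem:1} and some calculations,'' which is precisely your strategy of combining the explicit values and monotonicity of $g_k^{(\alpha)}$ from Lemma \ref{lem:1} with direct substitution into the defining relations \eqref{eq:2.8}. Your identification of the ``boundary'' comparisons as the only places requiring ad hoc polynomial checks is accurate, and your partial-sum argument via $S_m$ is the natural route.
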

Next, we will explore the properties of the eigenvalues of the
difference matrix of \eqref{eq:2.7} on grid points
$\{x_k=a+kh,h=(b-a)/n,k=1,2,\ldots,n-1\}$. In the following, we
denote by $H$ the symmetric (respectively, hermitian) part of $A$ if
A is real (respectively, complex) matrix.
\begin{lemma}[\cite{Quarteroni:07}]\label{lem3}
  A real matrix $A$ of order $n$ is positive definite if and only if its symmetric part $H=\frac{A+A^T}{2}$ is positive definite; $H$ is positive definite if and only if the eigenvalues of $H$ are positive.
\end{lemma}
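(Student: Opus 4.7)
The statement is a standard linear algebra fact, so the plan is direct: split it into two claims and handle them with the usual scalar‐identity trick and the spectral theorem.

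First I would prove the equivalence $A$ positive definite $\iff$ $H$ positive definite. The key observation is that for any real vector $x\in\mathbb{R}^n$, the quantity $x^{T}Ax$ is a scalar, hence equal to its own transpose: $x^{T}Ax=(x^{T}Ax)^{T}=x^{T}A^{T}x$. Averaging these two expressions gives
\begin{equation*}
  x^{T}Ax=\tfrac{1}{2}\bigl(x^{T}Ax+x^{T}A^{T}x\bigr)=x^{T}\tfrac{A+A^{T}}{2}x=x^{T}Hx,
\end{equation*}
so the quadratic forms associated with $A$ and $H$ coincide on $\mathbb{R}^{n}$. The equivalence then follows from the definition of positive definiteness.

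Next I would prove the equivalence $H$ positive definite $\iff$ every eigenvalue of $H$ is positive. Since $H$ is symmetric, the spectral theorem provides an orthogonal matrix $Q$ and a real diagonal matrix $D=\mathrm{diag}(\lambda_{1},\ldots,\lambda_{n})$ with $H=QDQ^{T}$. Setting $y=Q^{T}x$ yields
\begin{equation*}
  x^{T}Hx=y^{T}Dy=\sum_{i=1}^{n}\lambda_{i}y_{i}^{2}.
\end{equation*}
Because $Q$ is orthogonal, $x$ ranges over all nonzero vectors exactly when $y$ does. Thus positivity of all $\lambda_{i}$ immediately gives $x^{T}Hx>0$ for $x\neq 0$; conversely, taking $x=Qe_{i}$ (so $y=e_{i}$) shows $\lambda_{i}=x^{T}Hx>0$ whenever $H$ is positive definite.

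There is no real obstacle here; the only subtlety worth stating carefully is the scalar‐transpose identity $x^{T}Ax=x^{T}A^{T}x$, which is where the symmetrization $H=(A+A^{T})/2$ enters, and the invertibility of the change of variables $y=Q^{T}x$ that allows one to read off sign information from the diagonal form. Combining the two equivalences produces the lemma as stated.
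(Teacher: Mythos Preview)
Your proof is correct and entirely standard. Note, however, that the paper does not supply a proof of this lemma at all: it simply quotes the result from \cite{Quarteroni:07} and uses it as a black box. So there is nothing to compare against; your argument is a self-contained verification of a cited fact rather than an alternative to anything in the paper.
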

\begin{lemma}[\cite{Quarteroni:07}]\label{thm:2}
  If $A\in\mathbb{C}^{n\times n}$, let $H=\frac{A+A^*}{2}$
  be the hermitian part of $A$, $A^*$ the conjugate transpose of $A$, then for any eigenvalue $\lambda$ of $A$,
  there exists
  \begin{equation*}
    \lambda_{\min}(H)\le\mathrm{Re}(\lambda)\le\lambda_{\max}(H),
  \end{equation*}
  where $\mathrm{Re(\lambda)}$ represents the real part of $\lambda$, and $\lambda_{\min}(H)$,
  $\lambda_{\max}(H)$ are the minimum and maximum of the eigenvalues of $H$.
\end{lemma}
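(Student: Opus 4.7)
The plan is to extract $\mathrm{Re}(\lambda)$ as a Rayleigh quotient of the Hermitian matrix $H$ and then invoke the variational characterization of the extreme eigenvalues of a Hermitian matrix. Concretely, let $\lambda$ be any eigenvalue of $A$ and pick a normalized eigenvector $x\in\mathbb{C}^n$ with $Ax=\lambda x$ and $x^{*}x=1$. Multiplying on the left by $x^{*}$ gives $x^{*}Ax=\lambda$. Taking the conjugate transpose of this scalar identity yields $x^{*}A^{*}x=\overline{\lambda}$, and adding the two identities produces
\begin{equation*}
  x^{*}Hx \;=\; \tfrac{1}{2}\bigl(x^{*}Ax+x^{*}A^{*}x\bigr) \;=\; \tfrac{1}{2}(\lambda+\overline{\lambda}) \;=\; \mathrm{Re}(\lambda).
\end{equation*}

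Next I would note that $H=(A+A^{*})/2$ is Hermitian by construction, so by the spectral theorem $H$ admits a unitary diagonalization with real eigenvalues $\lambda_{\min}(H)\le\cdots\le\lambda_{\max}(H)$. The Rayleigh--Ritz (min-max) characterization then yields
\begin{equation*}
  \lambda_{\min}(H) \;=\; \min_{\|y\|_{2}=1} y^{*}Hy \;\le\; x^{*}Hx \;\le\; \max_{\|y\|_{2}=1} y^{*}Hy \;=\; \lambda_{\max}(H),
\end{equation*}
and combining with the identity $x^{*}Hx=\mathrm{Re}(\lambda)$ derived above gives the desired two-sided bound for every eigenvalue $\lambda$ of $A$.

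There is no serious obstacle here; the only point that needs emphasis is that the eigenvector $x$ is allowed to be complex (since $A$ need not be normal), which is why the conjugate transpose $x^{*}$ rather than the real transpose must be used when forming the Rayleigh quotient. Once that subtlety is handled, the Hermitian-ness of $H$ guarantees that $x^{*}Hx$ is real and that the Rayleigh quotient is sandwiched between the smallest and largest eigenvalues of $H$, completing the proof.
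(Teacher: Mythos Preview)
Your argument is correct and is precisely the standard Rayleigh--Ritz proof of this bound. Note, however, that the paper does not supply its own proof of this lemma: it is quoted as a known result from \cite{Quarteroni:07}, so there is no in-paper argument to compare against.
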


\begin{definition}[\cite{Chan:07}]
 Let Toeplitz matrix $T_n$ be of the following form,
  \begin{equation*}
    T_n=
    \begin{pmatrix}
      t_0     & t_{-1}  & \cdots & t_{2-n} & t_{1-n} \\
      t_1     & t_0     & t_{-1} & \cdots  & t_{2-n} \\
      \vdots  & t_1     & t_0    & \ddots  & \vdots  \\
      t_{n-2} & \cdots  & \ddots & \ddots  & t_{-1} \\
      t_{n-1} & t_{n-2} & \cdots & t_1     & t_0 \\
    \end{pmatrix},
  \end{equation*}
if the diagonals $\{t_k\}_{k=-n+1}^{n-1}$ are the Fourier
coefficients of a function $f$, i.e.,
 \begin{equation*} t_k=\frac{1}{2 \pi} \int_{-\pi}^{\pi} f(x)e^{-ikx}dx,\end{equation*} then the function $f$
is called the generating function of $T_n$.
\end{definition}

\begin{lemma}[Grenander-Szeg\"{o} theorem \cite{Chan:07,Chan:91}]\label{thm:3}
  For the above Toeplitz matrix $T_n$, if $f$ is a $2\pi$-periodic continuous real-valued function defined on $[-\pi,\pi]$, denote $\lambda_{\min}(T_n)$ and $\lambda_{\max}(T_n)$ as the smallest and largest eigenvalues of $T_n$, respectively.
  Then we have
  \begin{equation*}
    f_{\min}\le\lambda_{\min}(T_n)\le\lambda_{\max}(T_n)\le f_{\max},
  \end{equation*}
  where $f_{\min},~f_{\max}$ denote the minimum and maximum values of $f(x)$.
  Moreover, if $f_{\min}<f_{\max}$, then all eigenvalues of $T_n$ satisfy
  \begin{equation*}
    f_{\min}<\lambda(T_n)<f_{\max},
  \end{equation*}
  for all $n>0$; and furthermore if $f_{\min} \ge 0$, then $T_n$ is positive definite.
\end{lemma}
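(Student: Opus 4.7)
The plan is to invoke the Rayleigh quotient characterization together with the integral representation of the Toeplitz entries. For any nonzero $v=(v_1,\ldots,v_n)^{T}\in\mathbb{C}^n$ introduce the trigonometric polynomial $P_v(x)=\sum_{k=1}^{n}v_k\mathrm{e}^{ikx}$. Substituting $t_{j-k}=\frac{1}{2\pi}\int_{-\pi}^{\pi}f(x)\mathrm{e}^{-i(j-k)x}\,dx$ into the quadratic form $v^{*}T_n v=\sum_{j,k}\overline{v_j}\,t_{j-k}\,v_k$ and interchanging the finite sum with the integral yields
\begin{equation*}
v^{*}T_n v=\frac{1}{2\pi}\int_{-\pi}^{\pi}f(x)|P_v(x)|^2\,dx,
\end{equation*}
while Parseval's identity gives $v^{*}v=\frac{1}{2\pi}\int_{-\pi}^{\pi}|P_v(x)|^2\,dx$.

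Since $f$ is real-valued we have $t_{-k}=\overline{t_k}$, so $T_n$ is Hermitian and its extreme eigenvalues coincide with the extrema of the Rayleigh quotient $v^{*}T_n v/v^{*}v$. The pointwise sandwich $f_{\min}|P_v|^2\le f|P_v|^2\le f_{\max}|P_v|^2$ integrates immediately to $f_{\min}\le v^{*}T_n v/v^{*}v\le f_{\max}$, proving the weak inequalities of the lemma in one stroke.

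For the strict inequality when $f_{\min}<f_{\max}$, the key observation is that $P_v$ is a nonzero trigonometric polynomial (since $v\neq 0$), which via the substitution $z=\mathrm{e}^{ix}$ corresponds to a nonzero algebraic polynomial having only finitely many zeros on the unit circle; hence $|P_v(x)|^2>0$ almost everywhere on $[-\pi,\pi]$. By continuity of $f$, the open set $\{x:f(x)<f_{\max}\}$ has positive Lebesgue measure and therefore intersects $\{|P_v|^2>0\}$ in a set of positive measure, forcing $\int(f_{\max}-f)|P_v|^2\,dx>0$; the symmetric argument at $f_{\min}$ then gives $f_{\min}<v^{*}T_n v/v^{*}v<f_{\max}$ for every nonzero $v$, in particular for every eigenvector. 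Finally, when additionally $f_{\min}\ge 0$, the strict lower bound yields $\lambda_{\min}(T_n)>0$, so $T_n$ is positive definite. The one nontrivial ingredient, and hence the main obstacle I expect, is the almost-everywhere positivity of $|P_v|^2$; everything else reduces to routine Fourier-coefficient manipulations and the variational characterization of Hermitian eigenvalues.
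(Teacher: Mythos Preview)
The paper does not supply its own proof of this lemma; it is quoted as the Grenander--Szeg\H{o} theorem with citations to \cite{Chan:07,Chan:91} and used as a black box. Your argument is the standard textbook proof: the integral representation $v^{*}T_n v=\frac{1}{2\pi}\int f|P_v|^2$, Parseval for $v^{*}v$, and the observation that a nonzero trigonometric polynomial vanishes only on a set of measure zero. All steps are correct, including the strict-inequality part (continuity of $f$ makes $\{f<f_{\max}\}$ open and nonempty, hence of positive measure, and it meets $\{|P_v|^2>0\}$ in positive measure). The ``furthermore'' clause is indeed read under the standing hypothesis $f_{\min}<f_{\max}$, so your deduction $\lambda_{\min}(T_n)>f_{\min}\ge 0$ is the intended one. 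There is nothing to compare against in the paper itself; your write-up would serve as a self-contained justification of the cited result.
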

\begin{theorem}\label{thm:4}
  Let matrix $A$ be of the following form,
  \begin{equation}\label{eq:2.9}
    A=
    \begin{pmatrix}
      w_1^{(\alpha)}     & w_0^{(\alpha)}     &                &                &  \\
      w_2^{(\alpha)}     & w_1^{(\alpha)}     & w_0^{(\alpha)} &                &  \\
      \vdots             & w_2^{(\alpha)}     & w_1^{(\alpha)} & \ddots         &  \\
      w_{n-2}^{(\alpha)} & \cdots             & \ddots         & \ddots         & w_0^{(\alpha)} \\
      w_{n-1}^{(\alpha)} & w_{n-2}^{(\alpha)} & \cdots         & w_2^{(\alpha)} & w_1^{(\alpha)} \\
    \end{pmatrix},
  \end{equation}
  where the diagonals $\{w_k^{(\alpha)}\}_{k=0}^{n-1}$ are the coefficients given in \eqref{eq:2.7} corresponding to $(p,q)=(1,0)$ or $(1,-1)$. Then we have that any eigenvalue $\lambda$ of $A$ satisfies
  \begin{itemize}
    \item[(1)] $\mathrm{Re}(\lambda)\equiv0$, for $(p,q)=(1,0)$, $\alpha=1$,
    \item[(2)] $\mathrm{Re}(\lambda)<0$, for $(p,q)=(1,0)$, $1<\alpha\le2$,
    \item[(3)] $\mathrm{Re}(\lambda)<0$, for $(p,q)=(1,-1)$, $1\le\alpha\le2$.
  \end{itemize}
  Moreover, when $1<\alpha\le2$, matrix $A$ is negative definite, and the real parts of the eigenvalues of matrix $c_1A+c_2A^{\mathrm{T}}$ are less than 0, where $c_1,c_2\ge0,c_1^2+c_2^2\neq0$.
\end{theorem}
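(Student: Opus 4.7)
\medskip

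\noindent\textbf{Proof plan.} The plan is to view $A$ as a banded Toeplitz matrix whose symbol can be summed in closed form, to apply the Grenander-Szeg\"{o} theorem (Lemma~\ref{thm:3}) to its symmetric part $H = (A + A^{\mathrm{T}})/2$, and to conclude via the Bendixson estimate (Lemma~\ref{thm:2}). Reading the banded structure off \eqref{eq:2.9}, the diagonals of $A$ are $t_k = w_{k+1}^{(\alpha)}$ for $k \ge -1$ and $0$ for $k \le -2$. Using the generating identity $\sum_{k \ge 0} g_k^{(\alpha)} z^k = (1-z)^\alpha$ together with \eqref{eq:2.8}, the symbol factors as $f(x) = e^{-ix}\, P(e^{ix})\,(1 - e^{ix})^\alpha$, where $P(z) = \tfrac{\alpha}{2} + \tfrac{2-\alpha}{2} z$ in the $(p,q) = (1,0)$ case and $P(z) = \tfrac{2+\alpha}{4} + \tfrac{2-\alpha}{4} z^2$ in the $(p,q) = (1,-1)$ case. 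Because the weights are real, $f(-x) = \overline{f(x)}$, so the generating function of the symmetric Toeplitz matrix $H$ is the continuous real $2\pi$-periodic function $g(x) = \mathrm{Re}\, f(x)$.

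\medskip

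The crux is the pointwise sign estimate $g(x) \le 0$ on $[-\pi,\pi]$, with equality only at $x = 0$, valid in every case except $(p,q) = (1,0)$ with $\alpha = 1$. I would substitute $1 - e^{ix} = -2i\sin(x/2)\, e^{ix/2}$ to peel off the nonnegative factor $(2\sin(x/2))^\alpha$, so that $g(x)/(2\sin(x/2))^\alpha$ reduces to a bounded trigonometric function. For $(p,q) = (1,-1)$, setting $\beta = \alpha/2$ and $y = \pi - x$, a product-to-sum identity collapses the residual to $-F(y)$ with $F(y) = \cos y \cos(\beta y) + \beta \sin y \sin(\beta y)$; then $F'(y) = (\beta^2 - 1)\sin y \cos(\beta y)$ pins down the unique minimum of $F$ on $[0,\pi]$ at $y = \pi/(2\beta)$, of value $\beta \sin(\pi/(2\beta)) \ge 0$, strict unless $\beta = 1/2$. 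For $(p,q) = (1,0)$, the analogous substitution $\gamma = \alpha - 1$, $v = (\pi - x)/2$ rewrites the residual as $-\bigl[\sin(\gamma v)\sin v + \gamma \cos(\gamma v) \cos v\bigr]$, a sum of non-negative terms on $[0,\pi/2]$ which is in fact bounded below by $\gamma > 0$ once $\alpha > 1$. The degenerate case $\alpha = 1$ of (1) drops out without any of this: the list \eqref{eq:2.8} leaves only $w_0^{(1)} = 1/2$ and $w_2^{(1)} = -1/2$ nonzero, so $A$ is tridiagonal and skew-symmetric ($A = -A^{\mathrm{T}}$), hence its spectrum lies on the imaginary axis.

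\medskip

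Given the sign information, Grenander-Szeg\"{o} yields $\lambda_{\max}(H) < g_{\max} = 0$ in cases (2) and (3) (the isolated zero of $g$ forces $g_{\min} < g_{\max}$), and Bendixson upgrades this to $\mathrm{Re}(\lambda) \le \lambda_{\max}(H) < 0$ for every eigenvalue $\lambda$ of $A$. Lemma~\ref{lem3} then promotes negative definiteness of $H$ to negative definiteness of $A$ when $1 < \alpha \le 2$. For the ``moreover'' clause, the symmetric part of $c_1 A + c_2 A^{\mathrm{T}}$ equals $(c_1 + c_2) H$, which inherits negative definiteness from $H$ because $c_1 + c_2 > 0$, and a final application of Bendixson finishes the bound on the real parts of its eigenvalues.

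\medskip

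\noindent\textbf{Main obstacle.} The fussiest step will be the branch handling for $(1-e^{ix})^\alpha$ together with the trigonometric massaging that extracts the clean factor $(2\sin(x/2))^\alpha$; once the residual is isolated, a single derivative calculation drives both cases. The genuine content beyond bookkeeping is checking that the residual is uniformly non-negative in $\alpha$: the critical value $\beta \sin(\pi/(2\beta))$ for $(p,q)=(1,-1)$ vanishes precisely at the boundary $\alpha = 1$, and one has to verify that the $(2\sin(x/2))^\alpha$ prefactor still absorbs that zero at $x = 0$ while leaving strict negativity of $g$ on the remainder of $[-\pi,\pi]$, so that Grenander-Szeg\"{o}'s non-constancy hypothesis is genuinely met.
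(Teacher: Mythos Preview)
Your overall architecture matches the paper's proof exactly: pass to the symmetric part $H=\tfrac12(A+A^{\mathrm T})$, identify its generating function by summing the symbol in closed form via $(1-z)^\alpha$, peel off the nonnegative factor $(2\sin(x/2))^\alpha$, show the residual has the right sign, and finish with Grenander--Szeg\H{o} (Lemma~\ref{thm:3}) plus the Bendixson bound (Lemma~\ref{thm:2}). The closed trigonometric forms you arrive at after the substitution $y=\pi-x$ (resp.\ $v=(\pi-x)/2$) are literally the paper's formulas rewritten.

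The one point where you diverge is the sign argument for the residual. The paper instead claims that $f(\alpha;x)$ is monotone \emph{decreasing in $\alpha$}, so $f(\alpha;x)\le f(1;x)$, and then checks $f(1;x)\equiv 0$ for $(p,q)=(1,0)$ and $f(1;x)=-2\sin^4(x/2)$ for $(p,q)=(1,-1)$. Your route---differentiating in the spatial variable to locate the minimum of $F(y)=\cos y\cos(\beta y)+\beta\sin y\sin(\beta y)$ at $y=\pi/(2\beta)$, and observing $G(v)=\sin(\gamma v)\sin v+\gamma\cos(\gamma v)\cos v$ is increasing with $G(0)=\gamma$---is a clean alternative that avoids differentiating the $(2\sin(x/2))^\alpha$ prefactor in $\alpha$, which the paper sweeps under ``it is easy to prove''. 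Both arguments are short; yours is perhaps more self-contained, while the paper's comparison to $\alpha=1$ makes the degenerate boundary case (1) fall out of the same calculation rather than requiring your separate skew-symmetry observation.
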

\begin{proof}
  We consider the symmetric part of matrix $A$, denoted as $H=\frac{A+A^T}{2}$. The generating functions of $A$ and $A^T$ are
  \begin{equation*}
    f_A(x)=\sum_{k=0}^{\infty}w_k^{(\alpha)}\mathrm{e}^{i(k-1)x},\quad
    f_{A^T}(x)=\sum_{k=0}^{\infty}w_k^{(\alpha)}\mathrm{e}^{-i(k-1)x},
  \end{equation*}
  respectively. Then $f(\alpha;x)=\frac{f_A(x)+f_{A^T}(x)}{2}$ is the generating function of
  $H$, and $f(\alpha;x)$ is a periodic continuous real-valued function on
  $[-\pi,\pi]$ since $f_A(x)$ and $f_{A^T}(x)$ are mutually
  conjugated.


  Case $(p,q)=(1,0)$: with the corresponding coefficients $w_k^{(\alpha)}$ given by \eqref{eq:2.8}, then
  \begin{equation*}
    \begin{split}
      f(\alpha;x) & =\frac{1}{2}\Big(\sum_{k=0}^{\infty}w_k^{(\alpha)}\mathrm{e}^{i(k-1)x}
              +\sum_{k=0}^{\infty}w_k^{(\alpha)}\mathrm{e}^{-i(k-1)x}\Big) \\
           & =\frac{1}{2}\Big(\frac{\alpha}{2}\mathrm{e}^{-ix}\sum_{k=0}^{\infty}g_k^{(\alpha)}\mathrm{e}^{ikx}
              +\frac{2-\alpha}{2}\sum_{k=0}^{\infty}g_k^{(\alpha)}\mathrm{e}^{ikx}
              +\frac{\alpha}{2}\mathrm{e}^{ix}\sum_{k=0}^{\infty}g_k^{(\alpha)}\mathrm{e}^{-ikx}
              +\frac{2-\alpha}{2}\sum_{k=0}^{\infty}g_k^{(\alpha)}\mathrm{e}^{-ikx}\Big) \\
           & =\frac{\alpha}{4}\Big(\mathrm{e}^{-ix}(1-\mathrm{e}^{ix})^\alpha
              +\mathrm{e}^{ix}(1-\mathrm{e}^{-ix})^\alpha\Big)
              +\frac{2-\alpha}{4}\Big((1-\mathrm{e}^{ix})^\alpha+(1-\mathrm{e}^{-ix})^\alpha\Big).
    \end{split}
  \end{equation*}
  Next we check $f(\alpha;x)\le0$ for $1<\alpha\le2$. Since $f(\alpha;x)$ is a real-valued and even function, we just consider its principal value on $[0,\pi]$. By the formula
  \begin{equation*}
    \mathrm{e}^{i\theta}-\mathrm{e}^{i\phi}=2i
    \sin\big(\frac{\theta-\phi}{2}\big)\mathrm{\mathrm{e}}^{\frac{i(\theta+\phi)}{2}},
  \end{equation*}
  we obtain
  \begin{equation}
    f(\alpha;x)=\big(2\sin(\frac{x}{2})\big)^\alpha\ \Big(\frac{\alpha}{2}\cos\big(\frac{\alpha}{2}(x-\pi)-x\big)
         +\frac{2-\alpha}{2}\cos\big(\frac{\alpha}{2}(x-\pi)\big)\Big).
  \end{equation}
  It is easy to prove that $f(\alpha;x)$ decreases with respect to $\alpha$, then $f(\alpha;x)\le f(1;x)\equiv0$; by Lemma \ref{thm:2} and \ref{thm:3}, $\mathrm{Re}(\lambda)\equiv0$ for $\alpha=1$, and $f(\alpha;x)$ is not identically zero for $1<\alpha\le2$, then we get $\mathrm{Re}(\lambda)<0$.

  Case $(p,q)=(1,-1)$: the corresponding generating function $f(\alpha;x)$ of $\frac{A+A^T}{2}$ can be calculated in the following form with coefficients $w_k^{(\alpha)}$ given by
  \eqref{eq:2.8},
  \begin{equation*}
    \begin{split}
      f(\alpha;x) & =\frac{1}{2}\Big(\sum_{k=0}^{\infty}w_k^{(\alpha)}\mathrm{e}^{i(k-1)x}
              +\sum_{k=0}^{\infty}w_k^{(\alpha)}\mathrm{e}^{-i(k-1)x}\Big) \\
           & =\frac{2+\alpha}{8}\Big(\mathrm{e}^{-ix}\sum_{k=0}^{\infty}g_k^{(\alpha)}\mathrm{e}^{ikx}
              +\mathrm{e}^{ix}\sum_{k=0}^{\infty}g_k^{(\alpha)}\mathrm{e}^{-ikx}\Big)
              +\frac{2-\alpha}{8}\Big(\mathrm{e}^{ix}\sum_{k=0}^{\infty}g_k^{(\alpha)}\mathrm{e}^{ikx}
              +\mathrm{e}^{-ix}\sum_{k=0}^{\infty}g_k^{(\alpha)}\mathrm{e}^{-ikx}\Big) \\
           & =\frac{2+\alpha}{8}\Big(\mathrm{e}^{-ix}(1-\mathrm{e}^{ix})^\alpha
              +\mathrm{e}^{ix}(1-\mathrm{e}^{-ix})^\alpha\Big)
              +\frac{2-\alpha}{8}\Big(\mathrm{e}^{ix}(1-\mathrm{e}^{ix})^\alpha
              +\mathrm{e}^{-ix}(1-\mathrm{e}^{-ix})^\alpha\Big).
    \end{split}
  \end{equation*}
  Next we check $f(\alpha;x)\le0$ for $1<\alpha\le2$. Since $f(\alpha;x)$ is a real-valued and even function, we just consider its principal value on $[0,\pi]$. By simple calculation, we obtain
  \begin{equation}
    f(\alpha;x)=\big(2\sin(\frac{x}{2})\big)^\alpha\ \Big(\frac{\alpha}{2}\sin\big(\frac{\alpha}{2}(x-\pi)\big)\sin(x)
         +\cos\big(\frac{\alpha}{2}(x-\pi)\big)\cos(x)\Big).
  \end{equation}
  We can also check that $f(\alpha;x)$ decreases with respect to $\alpha$, then $f(\alpha;x)\le f(1;x)=-2\sin^4(\frac{x}{2})\le0$, then by Lemma \autoref{thm:2} and \ref{thm:3}, we get $\mathrm{Re}(\lambda)<0$ for $1\le\alpha\le2$.

  From the above discussions and Lemma \ref{thm:3}, we know, for $1<\alpha\le2$, the matrix $\frac{1}{2}(A+A^T)$ is negative definite, which implies matrix $A$ is negative definite by Lemma \ref{lem3}.
  And the symmetric part of matrix $c_1A+c_2A^{\mathrm{T}}$ is $\frac{c_1+c_2}{2}(A+A^{\mathrm{T}})$, thus we obtain $\mathrm{Re}(\lambda(c_1A+c_2A^{\mathrm{T}}))<0$ for $1<\alpha\le2$.
\end{proof}
\begin{remark}
  For the case $(p,q)=(1,0)$ and $1<\alpha\le2$, we can check that the symmetric part $H$ of matrix $A$ in \eqref{eq:2.9} is strictly diagonally dominant by using Lemma \ref{lem:2}, and the elements of the main diagonal of $H$ are negative, then the eigenvalues of $H$ are less than zero by the Gershgorin circle theorem (\cite{Quarteroni:07},P188), therefore, with Lemma \ref{lem3} and \ref{thm:2}, we can also get $\mathrm{Re(\lambda(A))}<0$, and $A$ is negative definite.
\end{remark}

\begin{remark}
  By the same approach described in Theorem \ref{thm:4}, we can verify that the generating function of the symmetric part of difference matrix for $(p,q)=(0,-1)$ is not identically negative when $1<\alpha\le2$, which leads to the instability of the difference method to fractional diffusion equations for the same reason in the stability analysis in the sequel.
\end{remark}

\subsection{Third Order Approximations}
Similar to the second order approximations for Riemann-Liouville
fractional derivatives, we give a combination of three shifted
Gr\"{u}nwald difference operators
\begin{equation}\label{eq:2.10}
    {_L}\mathcal{G}_{h,p,q,r}^{\alpha}u(x)=\lambda_1A_{h,p}^{\alpha}u(x)
    +\lambda_2A_{h,q}^{\alpha}u(x)+\lambda_3A_{h,r}^{\alpha}u(x),
\end{equation}
where $p,q,r$ are integers and mutually non-equal, and
\begin{equation}
  \begin{split}\label{eq:2.16}
      \lambda_1=\frac{12qr-(6q+6r+1)\alpha+3\alpha^2}{12(qr-pq-pr+p^2)},\\
      \lambda_2=\frac{12pr-(6p+6r+1)\alpha+3\alpha^2}{12(pr-pq-qr+q^2)},\\
      \lambda_3=\frac{12pq-(6p+6q+1)\alpha+3\alpha^2}{12(pq-pr-qr+r^2)}.
  \end{split}
\end{equation}
Assuming $u\in L^1(\mathbb{R})$, and taking Fourier transform on \eqref{eq:2.10}, we get
\begin{equation}
  \begin{split}
      \mathscr{F}[{_L}\mathcal{G}_{h,p,q,r}^{\alpha}u](\omega)
      & = (i\omega)^\alpha\Big(\lambda_1W_p(i\omega h)+\lambda_2W_q(i\omega h)+\lambda_3W_r(i\omega h)
        \Big)\hat{u}(\omega) \\
      & = (i\omega)^\alpha\Big(1+C(i\omega h)^3\Big)\hat{u}(\omega),
  \end{split}
\end{equation}
where $W_s(z)$ is defined in \eqref{eq:2.13}.
If ${_{-\infty}}D_x^{\alpha+3}u$ and its Fourier transform belong to $L^1(\mathbb{R})$, then we have
\begin{equation}
  \begin{split}
    \big|{_L}\mathcal{G}_{h,p,q,r}^{\alpha}u-{_{-\infty}}D_x^{\alpha}u\big|
    & \le\frac{1}{2\pi}\int_{\mathbb{R}}
      \big|\mathscr{F}[{_L}\mathcal{G}_{h,p,q,r}^{\alpha}u-{_{-\infty}}D_x^{\alpha}u]\big| \\
    & \le C\|\mathscr{F}[{_{-\infty}}D_x^{\alpha+3}u](\omega)\|_{L^1}h^3=O(h^3).
  \end{split}
\end{equation}
The above results can be stated in the following theorem.
\begin{theorem}\label{thm:5}
  Let $u\in L^1(\mathbb{R})$, ${_{-\infty}}D_x^{\alpha+3}u$ and its Fourier transform belong to $L^1(\mathbb{R})$, and the following 3-WSGD operator \eqref{eq:2.10} satisfies
  \begin{equation}
    {_L}\mathcal{G}_{h,p,q,r}^{\alpha}u(x)={_{-\infty}}D_x^{\alpha}u(x)+O(h^3),
  \end{equation}
  uniformly for $x\in\mathbb{R}$.
\end{theorem}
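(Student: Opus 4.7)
The plan is to mirror the Fourier-analytic proof of Theorem~\ref{thm:1}, pushing the symbol expansion one order further. Applying the Fourier transform termwise and using $\sum_k g_k^{(\alpha)} \mathrm{e}^{-ik\omega h} = (1-\mathrm{e}^{-i\omega h})^\alpha$ together with the shift factors $\mathrm{e}^{i\omega hs}$, I obtain, exactly as in \eqref{eq:2.12},
\begin{equation*}
\mathscr{F}[{_L}\mathcal{G}_{h,p,q,r}^\alpha u](\omega) = (i\omega)^\alpha \bigl(\lambda_1 W_p(i\omega h) + \lambda_2 W_q(i\omega h) + \lambda_3 W_r(i\omega h)\bigr)\hat{u}(\omega),
\end{equation*}
with $W_s(z) = ((1-\mathrm{e}^{-z})/z)^\alpha \mathrm{e}^{sz}$ as in \eqref{eq:2.13}. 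To achieve $O(h^3)$ accuracy, instead of just cancelling the $z^0$ and $z^1$ contributions as in Theorem~\ref{thm:1}, I must choose $\lambda_1,\lambda_2,\lambda_3$ so that the combination $\lambda_1 W_p + \lambda_2 W_q + \lambda_3 W_r$ agrees with $1$ through order $z^2$.

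The central computation is to extend the expansion \eqref{eq:2.13} by one further term. Writing $((1-\mathrm{e}^{-z})/z)^\alpha = \exp(\alpha \log((1-\mathrm{e}^{-z})/z))$ and carrying out Taylor expansions, I would verify
\begin{equation*}
W_s(z) = 1 + \Bigl(s - \tfrac{\alpha}{2}\Bigr) z + \frac{12 s^2 - 12\alpha s + 3\alpha^2 + \alpha}{24}\, z^2 + O(z^3).
\end{equation*}
Requiring $\lambda_1 W_p(z) + \lambda_2 W_q(z) + \lambda_3 W_r(z) = 1 + O(z^3)$ then leads, after combining the three coefficient equations, to the Vandermonde linear system
\begin{equation*}
\lambda_1 + \lambda_2 + \lambda_3 = 1, \quad \lambda_1 p + \lambda_2 q + \lambda_3 r = \tfrac{\alpha}{2}, \quad \lambda_1 p^2 + \lambda_2 q^2 + \lambda_3 r^2 = \tfrac{3\alpha^2 - \alpha}{12}.
\end{equation*}
The coefficient determinant equals $(q-p)(r-p)(r-q)$, which is nonzero because $p,q,r$ are pairwise distinct, so Cramer's rule yields a unique solution. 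Direct simplification of Cramer's formulas should reproduce precisely the expressions \eqref{eq:2.16} for $\lambda_1,\lambda_2,\lambda_3$.

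Once these weights are in place, the residual symbol satisfies $|\mathscr{F}[{_L}\mathcal{G}_{h,p,q,r}^\alpha u - {_{-\infty}}D_x^\alpha u](\omega)| \le C h^3 |i\omega|^{\alpha+3}|\hat{u}(\omega)|$, and the uniform $O(h^3)$ bound follows by Fourier inversion together with the hypothesis $\mathscr{F}[{_{-\infty}}D_x^{\alpha+3}u] \in L^1(\mathbb{R})$, exactly as in the last two displays of the proof of Theorem~\ref{thm:1}. The main obstacle is purely algebraic: tracking the $z^2$ Taylor coefficient of $W_s(z)$ through the $\exp\circ\log$ composition, and then verifying by routine algebra that the explicit $\lambda_i$ in \eqref{eq:2.16} satisfy the third equation of the Vandermonde system; no new analytic machinery beyond the template of Theorem~\ref{thm:1} is needed.
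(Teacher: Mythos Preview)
Your proposal is correct and follows essentially the same Fourier-analytic route as the paper: compute the symbol $\lambda_1 W_p+\lambda_2 W_q+\lambda_3 W_r$, choose the weights so that it equals $1+O(z^3)$, and finish by Fourier inversion using the $L^1$ hypothesis on $\mathscr{F}[{_{-\infty}}D_x^{\alpha+3}u]$. The only difference is expository---the paper simply asserts the identity $\lambda_1 W_p+\lambda_2 W_q+\lambda_3 W_r=1+C(i\omega h)^3$ for the stated $\lambda_i$, whereas you derive those weights by writing out the $z^2$ coefficient of $W_s$ and solving the resulting Vandermonde system; both arrive at exactly \eqref{eq:2.16}.
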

If $u\in L^1(\mathbb{R})$, ${_x}D_{\infty}^{\alpha+3}u$ and its
Fourier transform belong to $L^1(\mathbb{R})$, we also have
\begin{equation}\label{eq:2.15}
    {_R}\mathcal{G}_{h,p,q,r}^{\alpha}u(x)=\lambda_1B_{h,p}^{\alpha}u(x)
    +\lambda_2B_{h,q}^{\alpha}u(x)+\lambda_3B_{h,r}^{\alpha}u(x)={_x}D_{\infty}^{\alpha}u+O(h^3),
\end{equation}
uniformly for $x\in\mathbb{R}$, where the operator $B_{h,s}^{\alpha}$ is given by \eqref{eq:2.14}, and $\lambda_i,i=1,2,3$ are the same as \eqref{eq:2.16}.

As stated in Remark \ref{rem:3}, the 3-WSGD operator can be utilized
for approximating Riemann-Liouville fractional differential
equations on bounded domain by finite difference method when
choosing $(p,q,r)=(1,0,-1)$, then the corresponding weight
coefficients in \eqref{eq:2.16} are
$\lambda_1=\frac{5}{24}\alpha+\frac{1}{8}\alpha^2,~\lambda_2=1+\frac{1}{12}\alpha-\frac{1}{4}\alpha^2,~\lambda_3=-\frac{7}{24}\alpha+\frac{1}{8}\alpha^2$.
For function $u(x)$ satisfying $u(a)=u(b)=0$ on grid points
$\{x_k=a+kh,h=(b-a)/n,k=1,\ldots,n-1\}$, the approximation matrix of
\eqref{eq:2.10} with $(p,q,r)=(1,0,-1)$ is
\begin{equation}
  \begin{split}\label{eq:2.17}
    G= & \lambda_1
    \begin{pmatrix}
      g_1^{(\alpha)}     & g_0^{(\alpha)}     &                &                &  \\
      g_2^{(\alpha)}     & g_1^{(\alpha)}     & g_0^{(\alpha)} &                &  \\
      \vdots             & g_2^{(\alpha)}     & g_1^{(\alpha)} & \ddots         &  \\
      g_{n-2}^{(\alpha)} & \cdots             & \ddots         & \ddots         & g_0^{(\alpha)} \\
      g_{n-1}^{(\alpha)} & g_{n-2}^{(\alpha)} & \cdots         & g_2^{(\alpha)} & g_1^{(\alpha)} \\
    \end{pmatrix}+\lambda_2
    \begin{pmatrix}
      g_0^{(\alpha)}     &                    &                &                &  \\
      g_1^{(\alpha)}     & g_0^{(\alpha)}     &                &                &  \\
      \vdots             & g_1^{(\alpha)}     & g_0^{(\alpha)} &                &  \\
      g_{n-3}^{(\alpha)} & \cdots             & \ddots         & \ddots         &                \\
      g_{n-2}^{(\alpha)} & g_{n-3}^{(\alpha)} & \cdots         & g_1^{(\alpha)} & g_0^{(\alpha)} \\
    \end{pmatrix} \\
     & +\lambda_3
    \begin{pmatrix}
      0                  &                    &                &                &  \\
      g_0^{(\alpha)}     & 0                  &                &                &  \\
      \vdots             & g_0^{(\alpha)}     & 0              &                &  \\
      g_{n-4}^{(\alpha)} & \cdots             & \ddots         & \ddots         &                \\
      g_{n-3}^{(\alpha)} & g_{n-4}^{(\alpha)} & \cdots         & g_0^{(\alpha)} & 0              \\
    \end{pmatrix}.
  \end{split}
\end{equation}
\begin{example}\label{exm:0}
  We utilize the approximation \eqref{eq:2.10} for simulating the steady state fractional diffusion problem
  \begin{equation}
    -{_0}D_x^{\alpha}u(x)=-\frac{\Gamma(3+\alpha)}{2}x^2,\quad x\in(0,1),
  \end{equation}
  with $u(0)=0,~u(1)=1$, and $1<\alpha<2$. The exact solution is $u(x)=x^{2+\alpha}$.
\end{example}
The 3-WSGD operator with $(p,q,r)=(1,0,-1)$ is utilized for
computing the solution of Example \ref{exm:0}, the numerical results
are given in Table \ref{tab:0}, from which the order and accuracy of
the 3-WSGD operator is verified.
\begin{table}[htp]\fontsize{10pt}{12pt}\selectfont
  \begin{center}
  \caption{The maximum and $L^2$ errors and their convergence rates to Example \ref{exm:0} approximated by the 3-WSGD operator for $\alpha=1.1,1.9$.}\vspace{5pt}
  \begin{tabular*}{\linewidth}{@{\extracolsep{\fill}}*{1}{r}*{8}{c}}
    \toprule
     & \multicolumn{4}{c}{$\alpha=1.1$} & \multicolumn{4}{c}{$\alpha=1.9$} \\
    \cline{2-5} \cline{6-9} \\[-8pt]
    $N$ & $\|u^n-U^n\|_{\infty}$ & rate & $\|u^n-U^n\|$ & rate & $\|u^n-U^n\|_{\infty}$ & rate & $\|u^n-U^n\|$ & rate\\
    \midrule
    8 & 9.48629E-04 & - & 5.92003E-04 & - & 3.20333E-04 & - & 1.59788E-04 & - \\
    16 & 1.19530E-04 & 2.99  & 7.51799E-05 & 2.98 & 2.29262E-05 & 3.80  & 1.04858E-05 & 3.93 \\
    32 & 1.50130E-05 & 2.99  & 9.47995E-06 & 2.99 & 1.58500E-06 & 3.85  & 6.71546E-07 & 3.96 \\
    64 & 1.88094E-06 & 3.00  & 1.18999E-06 & 2.99 & 1.07818E-07 & 3.88  & 4.24776E-08 & 3.98 \\
    128 & 2.35382E-07 & 3.00  & 1.49052E-07 & 3.00 & 7.27733E-09 & 3.89  & 2.67067E-09 & 3.99 \\
    256 & 2.94392E-08 & 3.00  & 1.86501E-08 & 3.00 & 4.89318E-10 & 3.89  & 1.67325E-10 & 4.00 \\
    \toprule
  \end{tabular*}\label{tab:0}
  \end{center}
\end{table}
As we do in the above, the generating function of the symmetric part
$\frac{G+G^T}{2}$ of the Toeplitz matrix $G$ is
\begin{equation*}
  \begin{split}
    f(\alpha;x)= & \big(\frac{5}{48}\alpha+\frac{1}{16}\alpha^2\big)\Big(\mathrm{e}^{-ix}(1-\mathrm{e}^{ix})^\alpha
              +\mathrm{e}^{ix}(1-\mathrm{e}^{-ix})^\alpha\Big)\\
            & +\big(\frac{1}{2}+\frac{1}{24}\alpha-\frac{1}{8}\alpha^2\big)
              \Big((1-\mathrm{e}^{ix})^\alpha+(1-\mathrm{e}^{-ix})^\alpha\Big)\\
            & +\big(-\frac{7}{48}\alpha+\frac{1}{16}\alpha^2\big)\Big(\mathrm{e}^{ix}(1-\mathrm{e}^{ix})^\alpha
              +\mathrm{e}^{-ix}(1-\mathrm{e}^{-ix})^\alpha\Big),
  \end{split}
\end{equation*}
$x\in[-\pi,\pi]$. As matrix $\frac{G+G^T}{2}$ is symmetric, thus
$f(\alpha;x)$ is a real-valued and even function, so we consider it
on $[0,\pi]$ and get
\begin{equation*}
  \begin{split}
    f(\alpha;x)= & \big(2\sin(\frac{x}{2})\big)^\alpha\
              \Big(\big(\frac{5}{48}\alpha+\frac{1}{16}\alpha^2\big)\cos\big(\frac{\alpha}{2}(x-\pi)-x\big)
              +\big(\frac{1}{2}+\frac{1}{24}\alpha-\frac{1}{8}\alpha^2\big)\cos\big(\frac{\alpha}{2}(x-\pi)\big) \\
            & +\big(-\frac{7}{48}\alpha+\frac{1}{16}\alpha^2\big)\cos\big(\frac{\alpha}{2}(x-\pi)+x\big)\Big).
  \end{split}
\end{equation*}
We can check that $f(\alpha;x)$ is not identically positive or negative for $1\le\alpha\le2$, and the real parts of the eigenvalues of matrix $G$ are not always negative, so the finite difference scheme using \eqref{eq:2.10} or \eqref{eq:2.15} for time dependent fractional problems will not be unconditionally stable.
\section{One Dimensional Space Fractional Diffusion Equation}
In this section, we consider the following two-sided one dimensional
space fractional diffusion equation
\begin{equation}\label{eq:3.1}
  \begin{cases}
    \frac{\partial u(x, t)}{\partial t}=K_1~{_a}D_x^{\alpha}u(x, t)+K_2~{_x}D_b^{\alpha}u(x, t)+f(x,t),  &   \text{$(x, t) \in (a, b)\times (0, T]$,} \\
    u(x,0)=u_0(x),    & \text{$x\in [a, b]$},\\
    u(a,t)=\phi_a(t),\ \ u(b,t)=\phi_b(t), &\text{$t\in [0, T]$},
  \end{cases}
\end{equation}
where both ${_a}D_x^{\alpha}$ and $_xD_b^{\alpha}$ are Riemann-Liouville fractional
operators with $1<\alpha\leq 2$. The diffusion coefficients $K_1$ and $K_2$ are
nonnegative constants with $K_1^2+K_2^2\neq0$. And if $K_1\neq0$, then
$\phi_a(t)\equiv0$; if $K_2\neq0$, then $\phi_b(t)\equiv0$. Next we will discretize the
problem \eqref{eq:3.1} by the second order accurate WSGD formula \eqref{eq:2.7}. In the
analysis of the numerical method that follows, we assume that \eqref{eq:3.1} has a unique
and sufficiently smooth solution.
\subsection{CN-WSGD scheme}
We partition the interval $[a, b]$ into a uniform mesh with the
space step $h=(b-a)/N$ and the time step $\tau=T/M$, where $N, M$
being two positive integers. And the set of grid points are denoted
by $x_i=ih$ and $t_n=n\tau$ for $1\leq i\leq N$ and $0\leq n\leq M$.
Let $t_{n+1/2}=(t_n+t_{n+1})/2$ for $0\leq n\leq M-1$, and we use
the following notations
\begin{equation*}
  u_i^n=u(x_i, t_n), \ \ f_i^{n+1/2}=f(x_i, t_{n+1/2}), \ \ \delta_t u_i^n=(u_i^{n+1}-u_i^n)/\tau.
\end{equation*}
Using the Crank-Nicolson technique for the time discretization of
\eqref{eq:3.1} leads to
\begin{equation*}
  \delta_t u_i^n-\frac{1}{2}\Big(K_1({_a}D_x^{\alpha}u)_i^n+K_1({_a}D_x^{\alpha}u)_i^{n+1}
  +K_2({_x}D_b^{\alpha}u)_i^n+K_2({_x}D_b^{\alpha}u)_i^{n+1} \Big)=f_i^{n+1/2}+O(\tau^2).
\end{equation*}
 In space discretization, we choose the WSGD operators ${_L}\mathcal{D}_{h,p,q}^{\alpha}u(x, t)$ and ${_R}\mathcal{D}_{h, p, q}^{\alpha}u(x,t)$ to approximate the Riemann-Liouville fractional derivatives ${_a}D_{x}^{\alpha}u(x,t)$ and ${_x}D_{b}^{\alpha}u(x,t)$ with second order accuracy, respectively, and $(p,q)=(1,0)$ or $(1,-1)$.
 This implies that
\begin{equation}
  \begin{split}\label{eq:3.2}
    &\delta_t u_i^n-\frac{1}{2}\Big(K_1~{_L}\mathcal{D}_{h,p,q}^{\alpha}u_i^n
      +K_1~{_L}\mathcal{D}_{h,p,q}^{\alpha}u_i^{n+1}
      +K_2~{_R}\mathcal{D}_{h,p,q}^{\alpha}u_i^n+K_2~{_R}\mathcal{D}_{h,p,q}^{\alpha}u_i^{n+1} \Big) \\
    &=f_i^{n+1/2}+\varepsilon_i^n,
  \end{split}
\end{equation}
where
\begin{equation}
   |\varepsilon_i^n|\le \tilde{c}(\tau^2+h^2).
\end{equation}
Multiplying \eqref{eq:3.2} by $\tau$ and separating the time layers,
we have
\begin{equation}
  \begin{split}
    &u_i^{n+1}-\frac{K_1\tau }{2}{_L}\mathcal{D}_{h,p,q}^{\alpha}u_i^{n+1}
      -\frac{K_2\tau }{2}{_R}\mathcal{D}_{h,p,q}^{\alpha}u_i^{n+1}   \\
    &=u_i^n+\frac{K_1\tau}{2}{_L}\mathcal{D}_{h,p,q}^{\alpha}u_i^n
      +\frac{K_2\tau}{2}{_R}\mathcal{D}_{h,p,q}^{\alpha}u_i^n+\tau f_i^{n+1/2}+O(\tau^3+\tau h^2).
  \end{split}
\end{equation}
Substituting ${_L}\mathcal{D}_{h,p,q}^{\alpha}u, {_R}\mathcal{D}_{h,p,q}^{\alpha}u$ by \eqref{eq:2.7}, we obtain that
\begin{equation}
  \begin{split}
    \label{eq:3.4}
      &u_i^{n+1}-\frac{K_1\tau}{2h^{\alpha}}\sum_{k=0}^{i+1}w_k^{(\alpha)}u_{i-k+1}^{n+1}
        -\frac{K_2\tau}{2h^{\alpha}}\sum_{k=0}^{N-i+1}w_k^{(\alpha)}u_{i+k-1}^{n+1} \\
      &=u_i^n+\frac{K_1\tau}{2h^{\alpha}}\sum_{k=0}^{i+1}w_k^{(\alpha)}u_{i-k+1}^n
        +\frac{K_2\tau}{2h^{\alpha}}\sum_{k=0}^{N-i+1}w_k^{(\alpha)}u_{i+k-1}^n+\tau f_i^{n+1/2}+O(\tau^3+\tau h^2).
  \end{split}
\end{equation}
Denoting $U_i^n$ as the numerical approximation of $u_i^n$, we derive the CN-WSGD scheme for \eqref{eq:3.1}
\begin{equation}
  \begin{split}
    \label{eq:3.5}
      &U_i^{n+1}-\frac{K_1\tau}{2h^{\alpha}}\sum_{k=0}^{i+1}w_k^{(\alpha)}U_{i-k+1}^{n+1}
        -\frac{K_2\tau}{2h^{\alpha}}\sum_{k=0}^{N-i+1}w_k^{(\alpha)}U_{i+k-1}^{n+1} \\
      &=U_i^n+\frac{K_1\tau}{2h^{\alpha}}\sum_{k=0}^{i+1}w_k^{(\alpha)}U_{i-k+1}^n
        +\frac{K_2\tau}{2h^{\alpha}}\sum_{k=0}^{N-i+1}w_k^{(\alpha)}U_{i+k-1}^n+\tau f_i^{n+1/2}.
  \end{split}
\end{equation}
For the convenience of implementation, using the matrix form of the
grid functions
\begin{equation*}
  U^n=\Big(U_1^n, U_2^n,\cdots, U_{N-1}^n\Big)^{\mathrm{T}}, \ \ \ F^n=\Big(f_1^{n+1/2}, f_2^{n+1/2}, \cdots,
  f_{N-1}^{n+1/2}\Big)^{\mathrm{T}},
\end{equation*}
makes the finite difference scheme \eqref{eq:3.5} be described as
\begin{equation}\label{eq:3.6}
  \Big(I-\frac{\tau}{2h^{\alpha}}(K_1A+K_2A^{\mathrm{T}})\Big)U^{n+1}
  =\Big(I+\frac{\tau}{2h^{\alpha}}(K_1A+K_2A^{\mathrm{T}})\Big)U^{n}+\tau F^n+H^n,
\end{equation}
where $A$ is given by \eqref{eq:2.9} and
\begin{equation}
 H^n=\frac{\tau}{2h^{\alpha}}
 \begin{bmatrix}
   K_1w_2^{(\alpha)}+K_2w_0^{(\alpha)}\\
   K_1w_3^{(\alpha)}\\
   \vdots \\
   K_1w_{N-1}^{(\alpha)}\\
   K_1w_N^{(\alpha)}
 \end{bmatrix}(U_{0}^n+U_{0}^{n+1})+
 \frac{\tau}{2h^{\alpha}}
 \begin{bmatrix}
   K_2w_N^{(\alpha)}\\
   K_2w_{N-1}^{(\alpha)}\\
   \vdots \\
   K_2w_3^{(\alpha)}\\
   K_1w_0^{(\alpha)}+K_2w_2^{(\alpha)}
 \end{bmatrix}(U_{N}^n+U_{N}^{n+1}).
\end{equation}
\subsection{Stability and Convergence}
Now we consider the stability and convergence analysis for the CN-WSGD scheme
\eqref{eq:3.6}. Define
\begin{equation*}
V_h=\{v:v=\{v_{i}\} \text{ is a grid function in } \{x_i=ih\}_{i=1}^{N-1}\text{~and~}v_{0}=v_{N}=0\}.
\end{equation*}
For any $v=\{v_{i}\}\in V_h$, we define its pointwise maximum norm
\begin{equation}
  \|v\|_{\infty}=\max\limits_{1\le i\le N-1} |v_{i}|
\end{equation}
and the following discrete norm
\begin{align*}
    &\|v\|=\sqrt{h\sum_{i=1}^{N-1}v_{i}^2}. 
\end{align*}
\begin{theorem}
The finite difference scheme \eqref{eq:3.5} is unconditionally stable.
\end{theorem}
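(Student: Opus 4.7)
The plan is to reduce the stability question to a spectral/norm bound on the amplification matrix of the CN step. Set $B=\frac{1}{2h^{\alpha}}(K_1A+K_2A^{\mathrm{T}})$, so that the scheme \eqref{eq:3.6} becomes $(I-\tau B)U^{n+1}=(I+\tau B)U^n+\tau F^n+H^n$. By Theorem \ref{thm:4} (applied with $c_1=K_1/(2h^{\alpha})$, $c_2=K_2/(2h^{\alpha})$, which are nonnegative and not both zero because $K_1^2+K_2^2\ne 0$), every eigenvalue of $B$ has strictly negative real part, so $I-\tau B$ is invertible for every $\tau>0$ and the amplification matrix $M=(I-\tau B)^{-1}(I+\tau B)$ is well defined. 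Unconditional stability will follow once I prove $\|M\|_2\le 1$.

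The heart of the argument is an energy identity. For any real vector $v$, write $v^{\pm}=(I\pm\tau B)v$ and expand
\begin{equation*}
  \|v^+\|_2^2-\|v^-\|_2^2=4\tau\,v^{\mathrm{T}}\!\left(\tfrac{B+B^{\mathrm{T}}}{2}\right)v.
\end{equation*}
The symmetric part of $B$ is $\frac{K_1+K_2}{4h^{\alpha}}(A+A^{\mathrm{T}})$, and Theorem \ref{thm:4} guarantees that $A+A^{\mathrm{T}}$ is negative definite for $1<\alpha\le 2$; since $K_1+K_2>0$, the symmetric part of $B$ is negative definite. Hence $\|v^+\|_2\le\|v^-\|_2$ for every $v$. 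Substituting $v=(I-\tau B)^{-1}x$ gives $\|(I+\tau B)(I-\tau B)^{-1}x\|_2\le\|x\|_2$. Because $I+\tau B$ and $I-\tau B$ are polynomials in $B$ they commute, so $(I+\tau B)(I-\tau B)^{-1}=M$, yielding $\|M\|_2\le 1$ unconditionally in $\tau$ and $h$.

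From this it is routine to close the stability estimate. Let $\tilde U^n$ and $U^n$ be two solutions of \eqref{eq:3.6} driven by the same data (or consider perturbations of initial data with the homogeneous boundary terms $H^n$ absorbed in the right-hand side as a forcing). Their difference $E^n=U^n-\tilde U^n$ satisfies $E^{n+1}=ME^n$, and the bound $\|E^{n+1}\|_2\le\|E^n\|_2$ together with $\|v\|=\sqrt{h}\|v\|_2$ gives $\|E^{n+1}\|\le\|E^0\|$ in the discrete $L^2$ norm introduced before the theorem. A standard Duhamel-type summation handles the forcing term $\tau F^n$ to obtain the usual stability estimate $\|U^n\|\le\|U^0\|+\tau\sum_{k=0}^{n-1}\|F^k\|$.

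The main obstacle is that $B$ is non-symmetric, so neither spectral-radius bounds nor a direct eigenvalue computation suffice to control $\|M\|_2$. This is exactly where Theorem \ref{thm:4} is crucial: it upgrades the eigenvalue information on $A$ to negative definiteness of the symmetric part of $K_1A+K_2A^{\mathrm{T}}$, and the commutativity of $I\pm\tau B$ converts that definiteness into a genuine $2$-norm bound on $M$. Once this point is handled, everything else is standard Crank--Nicolson bookkeeping.
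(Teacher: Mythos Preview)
Your proof is correct, but it differs from the paper's in a meaningful way. The paper argues purely via spectral radius: writing $B=\frac{\tau}{2h^{\alpha}}(K_1A+K_2A^{\mathrm T})$, it observes that the symmetric part of $B$ is a positive multiple of $A+A^{\mathrm T}$, invokes Theorem~\ref{thm:4} to conclude $\mathrm{Re}(\lambda(B))<0$, and then notes that every eigenvalue of $(I-B)^{-1}(I+B)$ has modulus $|(1+\lambda)/(1-\lambda)|<1$, so $\rho(M)<1$; stability is asserted from that. You instead exploit the negative definiteness of the symmetric part directly through the energy identity $\|(I+\tau B)v\|_2^2-\|(I-\tau B)v\|_2^2=4\tau\,v^{\mathrm T}\tfrac{B+B^{\mathrm T}}{2}v$ to obtain the genuine operator-norm bound $\|M\|_2\le 1$. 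Your route is exactly the device the paper later uses in Lemma~\ref{lem:11} for the two-dimensional convergence proof; what you have done is bring that idea forward to the one-dimensional stability statement. The payoff is that $\|M\|_2\le 1$ immediately gives uniform-in-$n$ estimates $\|E^n\|\le\|E^0\|$ without any concern about transient growth of non-normal iterations, whereas the paper's spectral-radius conclusion is shorter to state but formally weaker for a fixed $n$.
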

\begin{proof}
  Denoting $B=\frac{\tau}{2h^{\alpha}}(K_1A+K_2A^{\mathrm{T}})$. The matrix form of the difference approximation for problem \eqref{eq:3.1} can be rewritten as
  \begin{equation}
    (I-B)U^{n+1}=(I+B)U^{n}+\tau F^n+H^n.
  \end{equation}
  If denote $\lambda$ as an eigenvalue of matrix $B$, then $\frac{1+\lambda}{1-\lambda}$ is the eigenvalue of matrix $(I-B)^{-1}(I+B)$. The result of Theorem \ref{thm:4} shows that the eigenvalues of matrix $\frac{B+B^{\mathrm{T}}}{2}=\frac{\tau(K_1+K_2)}{4h^{\alpha}}(A+A^{\mathrm{T}})$ are negative, thus $\mathrm{Re(\lambda)}<0$, which implies that $|\frac{1+\lambda}{1-\lambda}|<1$. Therefore, the spectral radius of matrix $(I-B)^{-1}(I+B)$ is less than one, then the discreted scheme \eqref{eq:3.5} is unconditionally stable.
\end{proof}
\begin{remark}\label{rem:5}
  Considering the $\theta$ weighted scheme for the time discretization of \eqref{eq:3.1}, then the iterative matrix of the full discrete scheme is
  \begin{equation}\label{eq:3.7}
    \big(I-\theta B\big)^{-1}\big(I+(1-\theta)B\big),
  \end{equation}
  if $\lambda$ is an eigenvalue of matrix $B$, then the eigenvalue of \eqref{eq:3.7} is $\frac{1+(1-\theta)\lambda}{1-\theta\lambda}$. As $Re(\lambda)<0$, it is easy to check that
  \begin{equation}
    \Big|\frac{1+(1-\theta)\lambda}{1-\theta\lambda}\Big|<1
  \end{equation}
  for $\frac{1}{2}\le\theta\le1$. Then the $\theta$ weighted WSGD scheme for \eqref{eq:3.1} is unconditionally stable when $\frac{1}{2}\le\theta\le1$.
\end{remark}
Before verifying the unconditional convergence of the scheme
\eqref{eq:3.5}, we first present the discrete Gronwall's inequality.
\begin{lemma}[\cite{Quarteroni:97}]\label{lem:3}
  Assume that $\{k_n\}$ and $\{p_n\}$ are nonnegative sequences, and the sequence $\{\phi_n\}$ satisfies
  \begin{equation*}
    \phi_0\leq g_0,\ \ \ \ \phi_n\leq g_0+\sum_{l=0}^{n-1}p_l+\sum_{l=0}^{n-1}k_l\phi_l,\ \ \ n\geq 1,
  \end{equation*}
  where $g_0\geq 0$. Then the sequence $\{\phi_n\}$ satisfies
  \begin{equation}
    \phi_n\leq \Big(g_0+\sum_{l=0}^{n-1}p_l\Big)\exp\Big(\sum_{l=0}^{n-1}k_l\Big),\ \ \ n\geq 1.
  \end{equation}
\end{lemma}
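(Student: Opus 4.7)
The plan is to reduce the hypothesis to a simple one-step recursion by introducing the dominating sequence $\psi_n = g_0 + \sum_{l=0}^{n-1} p_l + \sum_{l=0}^{n-1} k_l \phi_l$. By construction $\psi_0 = g_0$ and the hypothesis gives $\phi_n \le \psi_n$ for all $n \ge 0$. Because $p_l, k_l, \phi_l \ge 0$, the sequence $\psi_n$ is nondecreasing, and
\begin{equation*}
\psi_{n+1} - \psi_n \;=\; p_n + k_n \phi_n \;\le\; p_n + k_n \psi_n,
\end{equation*}
i.e.\ $\psi_{n+1} \le (1+k_n)\psi_n + p_n$.

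From here I would iterate this linear recursion. A straightforward induction on $n$ unwinds it into
\begin{equation*}
\psi_n \;\le\; g_0 \prod_{l=0}^{n-1}(1+k_l) \;+\; \sum_{l=0}^{n-1} p_l \prod_{j=l+1}^{n-1}(1+k_j),
\end{equation*}
with the convention that a product over an empty index set equals $1$ (covering the $l = n-1$ term). Applying the elementary inequality $1+x \le e^x$ for $x \ge 0$ termwise, every product is bounded by $\exp\bigl(\sum_{l=0}^{n-1} k_l\bigr)$, where the enlargement of the partial sums $\sum_{j=l+1}^{n-1} k_j$ to the full sum is justified by nonnegativity of the $k_j$. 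Factoring out this common exponential and using $\phi_n \le \psi_n$ yields the claimed bound
\begin{equation*}
\phi_n \;\le\; \Bigl(g_0 + \sum_{l=0}^{n-1} p_l\Bigr)\exp\Bigl(\sum_{l=0}^{n-1} k_l\Bigr).
\end{equation*}

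There is no substantive obstacle here: the nonnegativity hypotheses on $\{k_n\}$, $\{p_n\}$ (and implicitly on $\phi_n$, which inherits it from the hypothesis when $g_0\ge 0$) are exactly what makes the monotonicity of $\psi_n$, the passage from $\phi_n$ to $\psi_n$, and the enlargement of partial sums all go through cleanly. The only mild care needed is in the base step $n=1$ and in handling empty products, after which the induction is mechanical.
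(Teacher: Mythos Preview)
Your argument is correct and is the standard proof of the discrete Gronwall inequality. One small remark: you do not actually need $\phi_l \ge 0$ anywhere. The recursion step $\psi_{n+1}-\psi_n = p_n + k_n\phi_n \le p_n + k_n\psi_n$ only uses $k_n \ge 0$ together with $\phi_n \le \psi_n$, and the latter holds directly from the hypothesis (for $n\ge 1$) and from $\phi_0\le g_0=\psi_0$ (for $n=0$). The monotonicity of $\psi_n$ is also not needed for the argument to go through; the iteration of $\psi_{n+1}\le(1+k_n)\psi_n+p_n$ is valid as written.

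As for comparison with the paper: there is nothing to compare. The paper does not prove this lemma; it is simply quoted from Quarteroni--Valli as a known result and used as a black box in the convergence proof of the one-dimensional scheme. Your proof supplies exactly the kind of self-contained justification one would expect for this standard tool.
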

\begin{theorem}
  Let $u_i^n$ be the exact solution of problem \eqref{eq:3.1}, and $U_i^n$ the solution of the finite difference scheme \eqref{eq:3.5}, then for all $1\leq n\leq M$, we have
  \begin{equation}
    \|u^n-U^n\|\leq c(\tau^2+h^2),
  \end{equation}
  where $c$ denotes a positive constant and $\|\cdot\|$ stands for the discrete $L^2$-norm.
\end{theorem}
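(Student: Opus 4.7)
The plan is to carry out a standard energy estimate in the discrete $L^2$ norm. First I introduce the error $e_i^n = u_i^n - U_i^n$, noting that the exact initial and Dirichlet data give $e_i^0 \equiv 0$ and $e_0^n = e_N^n = 0$. Subtracting the scheme \eqref{eq:3.5} from the consistency identity \eqref{eq:3.4} yields the matrix error equation
$(I - B)e^{n+1} = (I + B)e^n + \tau R^n$,
where $B = \frac{\tau}{2h^\alpha}(K_1 A + K_2 A^{\mathrm T})$; the boundary vectors $H^n$ cancel because the two solutions share the same Dirichlet data, and the truncation error components inherit $|R_i^n| = |\varepsilon_i^n| \le \tilde c(\tau^2+h^2)$. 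Converting the pointwise estimate to the discrete $L^2$ norm gives $\|R^n\| \le \sqrt{b-a}\,\tilde c(\tau^2+h^2) =: c_0(\tau^2+h^2)$.

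Next I would rewrite the error equation in the equivalent Crank--Nicolson form $e^{n+1} - e^n = B(e^{n+1}+e^n) + \tau R^n$ and take the discrete $L^2$ inner product of both sides with $e^{n+1}+e^n$. The left-hand side telescopes to $\|e^{n+1}\|^2 - \|e^n\|^2$. For the bilinear-form term I would use the identity $(Bv,v) = \tfrac{1}{2}\bigl(v,(B+B^{\mathrm T})v\bigr)$ together with
$B + B^{\mathrm T} = \frac{\tau(K_1+K_2)}{2h^\alpha}(A + A^{\mathrm T})$
and invoke Theorem \ref{thm:4}: for $1<\alpha\le 2$ the matrix $A+A^{\mathrm T}$ is negative definite, and since $K_1+K_2>0$ it follows that $\bigl(B(e^{n+1}+e^n),\,e^{n+1}+e^n\bigr)\le 0$. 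Combining this with Cauchy--Schwarz on the $R^n$ inner product yields $\|e^{n+1}\|^2 - \|e^n\|^2 \le \tau\|R^n\|\bigl(\|e^{n+1}\|+\|e^n\|\bigr)$.

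Factoring the left side as $(\|e^{n+1}\|-\|e^n\|)(\|e^{n+1}\|+\|e^n\|)$ and dividing produces the clean one-step bound $\|e^{n+1}\|-\|e^n\| \le \tau\|R^n\|$; summing from $0$ to $n-1$ with $\|e^0\|=0$ then yields $\|e^n\| \le n\tau\max_k\|R^k\| \le T c_0(\tau^2+h^2)$, which is the desired estimate. If one prefers, the same bound can be reached by applying Young's inequality to the cross term and then invoking the discrete Gronwall inequality (Lemma \ref{lem:3}), but the telescoping route makes that machinery unnecessary in the one-dimensional constant-coefficient setting. The principal obstacle is really already discharged by Theorem \ref{thm:4}: the sign of the quadratic form $(Bv,v)$ must be controlled uniformly in $h$ and for both admissible choices $(p,q)\in\{(1,0),(1,-1)\}$, and the generating-function analysis of $A+A^{\mathrm T}$ is what supplies that fact. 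Once this algebraic input is in hand, what remains is just routine Crank--Nicolson manipulation together with the truncation estimate already secured by Theorem \ref{thm:1}.
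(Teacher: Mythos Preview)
Your proposal is correct and follows essentially the same energy argument as the paper: form the error equation, test against $e^{n+1}+e^n$, discard the spatial term via the negative definiteness of $A$ from Theorem~\ref{thm:4}, and bound the resulting telescoping quantity by the truncation error. The only difference is cosmetic: where the paper applies Young's inequality and the discrete Gronwall lemma (Lemma~\ref{lem:3}) to close the estimate, you factor $\|e^{n+1}\|^2-\|e^n\|^2$ and sum directly, which is slightly cleaner but not a different route.
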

\begin{proof}
  Let $e_i^n=u_i^n-U_i^n$, and from \eqref{eq:3.4} and \eqref{eq:3.5}
  we have
  \begin{equation}\label{eq:3.8}
    (e^{n+1}-e^{n})-\frac{K_1\tau}{2h^{\alpha}}A(e^{n+1}+e^n)
    -\frac{K_2\tau}{2h^{\alpha}}A^{\mathrm{T}}(e^{n+1}+e^n)=\tau
    \varepsilon^n,
  \end{equation}
  where
  \begin{equation*}
    e^n=\Big(u_1^n-U_1^n,u_2^n-U_2^n,\cdots,u_{N-1}^n-U_{N-1}^n\Big)^{\mathrm{T}}, \varepsilon^n=\Big(\varepsilon_1^n,\varepsilon_2^n,\cdots,\varepsilon_{N-1}^n\Big)^{\mathrm{T}}.
  \end{equation*}
  Multiplying \eqref{eq:3.8} by $h$, and acting $(e^{n+1}+e^n)^\mathrm{T}$ on both sides, we obtain that
  \begin{equation*}
    \begin{split}
      & h(e^{n+1}+e^n)^{\mathrm{T}}I(e^{n+1}-e^{n})
        -\frac{K_1\tau}{2h^{\alpha-1}}(e^{n+1}+e^n)^{\mathrm{T}}A(e^{n+1}+e^n)    \\
      & -\frac{K_2\tau}{2h^{\alpha-1}}(e^{n+1}+e^n)^{\mathrm{T}}A^{\mathrm{T}}(e^{n+1}+e^n)
        =\tau h(e^{n+1}+e^n)^{\mathrm{T}}\varepsilon^n.
    \end{split}
  \end{equation*}
  By Theorem \ref{thm:4}, $A$ and its transpose $A^\mathrm{T}$ both being the negative definite matrices, we
  get
  \begin{equation}
   (e^{n+1}+e^n)^{\mathrm{T}}A(e^{n+1}+e^n)<0, \ \ \ \ (e^{n+1}+e^n)^{\mathrm{T}}A^{\mathrm{T}}(e^{n+1}+e^n)<0.
  \end{equation}
  And it yields that
  \begin{equation}
    h\sum_{i=1}^{N-1}\Big((e_i^{n+1})^2-(e_i^n)^2\Big)\leq \tau
    h\sum_{i=1}^{N-1}\Big(e_i^{n+1}+e_i^n\Big)\varepsilon_i^n.
  \end{equation}
  Summing up for all $0\leq k \leq n-1$, we have
  \begin{equation}
    \begin{split}
      h\sum_{i=1}^{N-1}(e_i^{n})^2&\leq \tau h\sum_{i=1}^{N-1}\sum_{k=0}^{n-1}\Big(e_i^{k+1}+e_i^{k}\Big)\varepsilon_i^k=
      \tau h\sum_{i=1}^{N-1}\sum_{k=1}^{n-1}e_i^k\Big(\varepsilon_i^{k-1}+\varepsilon_i^k\Big)+\tau h
      \sum_{i=1}^{N-1}e_i^{n}\varepsilon_i^{n-1}           \\
      &\leq \frac{\tau h}{2}\sum_{i=1}^{N-1}\sum_{k=1}^{n-1}(e_i^k)^2+\frac{\tau h}{2}\sum_{i=1}^{N-1}\sum_{k=1}^{n-1}\Big(\varepsilon_i^{k-1}+\varepsilon_i^k\Big)^2+\frac{h}{2}
      \sum_{i=1}^{N-1}(e_i^n)^2+\frac{h}{2}\sum_{i=1}^{N-1}(\tau \varepsilon_i^{n-1})^2.
    \end{split}
  \end{equation}
  By noting that $\varepsilon_i^n\leq \tilde{c}(\tau^2+h^2)$, and utilizing the discrete Gronwall's inequality, we obtain that
  \begin{equation}
    \|e^n\|^2\leq \tau\sum_{k=1}^{n-1}\|e^k\|^2+\Big(\tilde{c}(\tau^2+h^2)\Big)^2\leq\exp(T)\Big(\tilde{c}(\tau^2+h^2)\Big)^2\leq c\Big((\tau^2+h^2)^2\Big),
  \end{equation}
  which is the result that we need.
\end{proof}
\section{Two Dimensional Space Fractional Diffusion Equation}
We next consider the following two-sided space fractional diffusion
equation in two dimensions
\begin{equation}\label{eq:4.1}
  \begin{cases}
    \frac{\partial u(x, y, t)}{\partial t}=\Big(K_1^{+}{_a}D_x^{\alpha}u(x, y, t)+K_2^{+}{_x}D_b^{\alpha}u(x, y, t)\Big)\\
    ~~~~~~~~~~~~~~~~~~  +\Big(K_1^{-}{_c}D_y^{\beta}u(x, y, t)+K_2^{-}{_y}D_d^{\beta}u(x, y, t)\Big)+f(x, y, t),  &   \text{$(x, y, t) \in \Omega\times [0, T]$,} \\
    u(x, y, 0)=u_0(x, y),    & \text{$(x, y)\in \Omega$},\\
    u(x, y, t)=\varphi(x, y, t), &\text{$(x, y, t)\in\partial\Omega\times[0, T]$},
  \end{cases}
\end{equation}
where $\Omega=(a,b)\times(c,d)$, ${_a}D_x^{\alpha}, {_x}D_b^{\alpha}$ and
${_c}D_y^{\beta}, {_y}D_d^{\beta}$ are Riemann-Liouville fractional operators with
$1<\alpha, \beta \leq 2$. The diffusion coefficients satisfy $K_i^{+}, ~K_i^{-}\geq
0,~i=1,2$, $(K_1^{+})^2+(K_2^{+})^2\neq0$ and $(K_1^{-})^2+(K_2^{-})^2\neq0$. And the
boundary function $\varphi$ satisfies, if $K_1^{+}\neq0$, then $\varphi(a,y,t)=0$; if
$K_1^{-}\neq0$, then $\varphi(b,y,t)=0$; if $K_2^{+}\neq0$, then $\varphi(x,c,t)=0$; if
$K_2^{-}\neq0$, then $\varphi(x,d,t)=0$. We assume that \eqref{eq:4.1} has a unique and
sufficiently smooth solution.
\subsection{CN-WSGD scheme}
Now we establish the Crank-Nicolson difference scheme by using WSGD formula
\eqref{eq:2.7} for problem \eqref{eq:4.1}. We partition the domain $\Omega$ into a
uniform mesh with the space steps $h_x=(b-a)/N_x, h_y=(d-c)/N_y$ and the time step
$\tau=T/M$, where $N_x, N_y, M$ being positive integers. And the set of grid points
are denoted by $x_i=ih_x, y_j=jh_y$ and $t_n=n\tau$ for $1\leq i\leq N_x, 1\leq j\leq
N_y$ and $0\leq n\leq M$. Let $t_{n+1/2}=(t_n+t_{n+1})/2$ for $0\leq n\leq M-1$, and we
use the following notations
\begin{equation*}
  u_{i, j}^n=u(x_i, y_j, t_n), \ \ f_{i, j}^{n+1/2}=f(x_i, y_j, t_{n+1/2}), \ \ \delta_t u_{i, j}^n=(u_{i, j}^{n+1}-u_{i, j}^n)/\tau.
\end{equation*}
Discretizing \eqref{eq:4.1} in time direction leads to
\begin{equation}\label{eq:4.2}
  \begin{split}
    \delta_t u_{i,j}^{n}&=\frac{1}{2}\Big(K_1^{+}(_aD_x^{\alpha}u)_{i,j}^{n+1}+K_2^{+}(_xD_b^{\alpha}u)_{i,j}^{n+1}+K_1^{-}(_cD_y^{\beta}u)_{i,j}^{n+1}+K_2^{-}(_yD_d^{\beta}u)_{i,j}^{n+1}                           \\
    &+K_1^{+}(_aD_x^{\alpha}u)_{i,j}^n+K_2^{+}(_xD_b^{\alpha}u)_{i,j}^n+K_1^{-}(_cD_y^{\beta}u)_{i,j}^n+K_2^{-}(_yD_d^{\beta}u)_{i,j}^n\Big)+f_{i,j}^{n+1/2}+O(\tau^2).
  \end{split}
\end{equation}
In space discretization, we choose the WSGD operators
$_L\mathcal{D}_{h_{x}, p, q}^{\alpha}u, ~_R\mathcal{D}_{h_x, p,
q}^{\alpha}u$ and $_L\mathcal{D}_{h_{y}, p, q}^{\beta}u,~
_R\mathcal{D}_{h_y, p, q}^{\beta}u$ to respectively approximate the
fractional diffusion terms $_aD_x^{\alpha}u, ~_xD_b^{\alpha}u$ and
$_cD_y^{\beta}u,~ _yD_d^{\beta}u$. And multiplying \eqref{eq:4.2} by
$\tau$ and separating the time layers, we have that
\begin{equation}\label{eq:4.3}
  \begin{split}
    &\Big(1-\frac{K_1^{+}\tau}{2}{_L}\mathcal{D}_{h_x,p,q}^{\alpha}
        -\frac{K_2^{+}\tau}{2}{_R}\mathcal{D}_{h_x,p,q}^{\alpha}
        -\frac{K_1^{-}\tau}{2}{_L}\mathcal{D}_{h_y,p,q}^{\beta}
        -\frac{K_2^{-}\tau}{2}{_R}\mathcal{D}_{h_y,p,q}^{\beta}\Big)u_{i,j}^{n+1} \\
    &=\Big(1+\frac{K_1^{+}\tau}{2}{_L}\mathcal{D}_{h_x,p,q}^{\alpha}
        +\frac{K_2^{+}\tau}{2}{_R}\mathcal{D}_{h_x,p,q}^{\alpha}
        +\frac{K_1^{-}\tau}{2}{_L}\mathcal{D}_{h_y,p,q}^{\beta}
        +\frac{K_2^{-}\tau}{2}{_R}\mathcal{D}_{h_y,p,q}^{\beta}\Big)u_{i,j}^{n}
        +\tau f_{i,j}^{n+1/2}+\tau\varepsilon_{i,j}^{n},
  \end{split}
\end{equation}
where $|\varepsilon_{i,j}^{n}|\leq \tilde{c}(\tau^2+h^2)$ denotes the truncation error.
And we denote
\begin{equation*}
  \delta_x^\alpha=K_1^{+}{_L}\mathcal{D}_{h_x,p,q}^{\alpha}+K_2^{+}{_R}\mathcal{D}_{h_x,p,q}^{\alpha}, \qquad
  \delta_y^\beta=K_1^{-}{_L}\mathcal{D}_{h_y,p,q}^{\beta}+K_2^{-}{_R}\mathcal{D}_{h_y,p,q}^{\beta}.
\end{equation*}
For simplicity, the step sizes are chosen as the same, $h=h_x=h_y$.
Using the Taylor expansion, we have
\begin{equation}\label{eq:4.4}
 \frac{\tau^2}{4}\delta_x^{\alpha}\delta_x^{\beta}(u_{i,j}^{n+1}-u_{i,j}^{n})=\frac{\tau^3}{4}\Big((K_1^{+}{_a}D_x^{\alpha}+K_2^{+}{_x}D_{b}^{\alpha})(K_1^{-}{_c}D_{y}^{\beta}+K_2^{-}{_y}D_{d}^{\beta})u_t\Big)_{i,j}^{n+1/2}+O(\tau^5+\tau^3h^2).
 \end{equation}
Adding formula \eqref{eq:4.4} to the right-hand side of
\eqref{eq:4.3} and making the factorization leads to
\begin{equation}
  \begin{split}\label{eq:4.6}
    \Big(1-\frac{\tau}{2}\delta_x^\alpha\Big)\Big(1-\frac{\tau}{2}\delta_y^\beta\Big)u_{i,j}^{n+1}
    = \Big(1+\frac{\tau}{2}\delta_x^\alpha\Big)\Big(1+\frac{\tau}{2}\delta_y^\beta\Big)u_{i,j}^n+\tau f_{i,j}^{n+1/2}+\tau \varepsilon_{i,j}^{n}+O(\tau^3+\tau^3h^2).
  \end{split}
\end{equation}
Denoting by $U_{i,j}^n$ the numerical approximation to $u_{i,j}^n$,
we obtain the finite difference approximation for problem
\eqref{eq:4.1}
\begin{equation}
  \begin{split}\label{eq:4.7}
    & \Big(1-\frac{\tau}{2}\delta_x^\alpha\Big)\Big(1-\frac{\tau}{2}\delta_y^\beta\Big)U_{i,j}^{n+1}
    = \Big(1+\frac{\tau}{2}\delta_x^\alpha\Big)\Big(1+\frac{\tau}{2}\delta_y^\beta\Big)U_{i,j}^n+\tau f_{i,j}^{n+1/2}.
  \end{split}
\end{equation}
For efficiently solving (\ref{eq:4.7}), the following techniques can
be used. Peaceman-Rachford ADI scheme \cite{Sun:05}:
\begin{subequations}\label{eq:4.16}
\begin{align}
    &\Big(1-\frac{\tau}{2}\delta_x^\alpha\Big)V_{i,j}^{n}~~~
     =\Big(1+\frac{\tau}{2}\delta_y^\beta\Big)U_{i,j}^{n}+\frac{\tau}{2}f_{i,j}^{n+1/2}, \\
    &\Big(1-\frac{\tau}{2}\delta_y^\beta\Big)U_{i,j}^{n+1}
     =\Big(1+\frac{\tau}{2}\delta_x^\alpha\Big)V_{i,j}^{n}+\frac{\tau}{2}f_{i,j}^{n+1/2}.
\end{align}
\end{subequations}
Douglas ADI scheme \cite{Douglas:01}:
\begin{subequations}\label{eq:4.17}
\begin{align}
    &\Big(1-\frac{\tau}{2}\delta_x^\alpha\Big)V_{i,j}^{n}~~~=
     \Big(1+\frac{\tau}{2}\delta_x^\alpha+\tau\delta_y^\beta\Big)U_{i,j}^{n}+\tau f_{i,j}^{n+1/2}, \\
    &\Big(1-\frac{\tau}{2}\delta_y^\beta\Big)U_{i,j}^{n+1}
     =V_{i,j}^n-\frac{\tau}{2}\delta_y^\beta U_{i,j}^n.
\end{align}
\end{subequations}
D'Yakonov ADI scheme \cite{Sun:05}:
\begin{subequations}\label{eq:4.18}
\begin{align}
    &\Big(1-\frac{\tau}{2}\delta_x^\alpha\Big)V_{i,j}^{n}~~~=\Big(1+\frac{\tau}{2}\delta_x^\alpha\Big)
     \Big(1+\frac{\tau}{2}\delta_y^\beta\Big)U_{i,j}^{n}+\tau f_{i,j}^{n+1/2}, \\
    &\Big(1-\frac{\tau}{2}\delta_y^\beta\Big)U_{i,j}^{n+1}=V_{i,j}^n.
\end{align}
\end{subequations}
A simple calculation shows that
\begin{equation}
\label{eq:4.8}
\frac{\tau^3}{4}\delta_x^{\alpha}\delta_y^{\beta}f_{i,j}^{n+1/2}=\frac{\tau^3}{4}(K_1^{+}{_a}D_x^{\alpha}+K_2^{+}{_x}D_{b}^{\alpha})(K_1^{-}{_c}D_{y}^{\beta}+K_2^{-}{_y}D_{d}^{\beta})f_{i,j}^{n+1/2}+O(\tau^3h^2).
\end{equation}
Then from \eqref{eq:4.6} and \eqref{eq:4.8}, it yields that
\begin{equation}
  \begin{split}
    \Big(1-\frac{\tau}{2}\delta_x^\alpha\Big)\Big(1-\frac{\tau}{2}\delta_y^\beta\Big)u_{i,j}^{n+1}
    = \Big(1+\frac{\tau}{2}\delta_x^\alpha\Big)\Big(1+\frac{\tau}{2}\delta_y^\beta\Big)u_{i,j}^n+\tau f_{i,j}^{n+1/2}+
    \frac{\tau^3}{4}\delta_x^{\alpha}\delta_y^{\beta}f_{i,j}^{n+1/2}+\tau \tilde{\varepsilon}_{i,j}^{n}.
  \end{split}
\end{equation}
where
\begin{equation}
\tilde{\varepsilon}_{i,j}^n=\varepsilon_{i,j}^{n}-\frac{\tau^2}{4}(K_1^{+}{_a}D_x^{\alpha}+K_2^{+}{_x}D_{b}^{\alpha})(K_1^{-}{_c}D_{y}^{\beta}+K_2^{-}{_y}D_{d}^{\beta})f_{i,j}^{n+1/2}+O(\tau^2+\tau^2h^2).
\end{equation}
Eliminating the truncating error and denoting $U_{i,j}^n$ as the numerical approximation of $u_{i,j}^n$, we have
\begin{equation}
  \begin{split}\label{eq:4.10}
    \Big(1-\frac{\tau}{2}\delta_x^\alpha\Big)\Big(1-\frac{\tau}{2}\delta_y^\beta\Big)U_{i,j}^{n+1}
    = \Big(1+\frac{\tau}{2}\delta_x^\alpha\Big)\Big(1+\frac{\tau}{2}\delta_y^\beta\Big)U_{i,j}^n+\tau f_{i,j}^{n+1/2}+
    \frac{\tau^3}{4}\delta_x^{\alpha}\delta_y^{\beta}f_{i,j}^{n+1/2}.
  \end{split}
\end{equation}
Introducing the intermediate variable $V_{i,j}^{n}$, we obtain the locally one-dimensional (LOD) scheme mentioned in \cite{Qin:11,Wang:06},
\begin{subequations}\label{eq:4.15}
\begin{align}
    &\Big(1-\frac{\tau}{2}\delta_x^\alpha\Big)V_{i,j}^{n}~~~
      =\Big(1+\frac{\tau}{2}\delta_x^\alpha\Big)U_{i,j}^{n}
      +\frac{\tau}{2}\Big(1+\frac{\tau}{2}\delta_x^\alpha\Big)f_{i,j}^{n+1/2},\\
    &\Big(1-\frac{\tau}{2}\delta_y^\beta\Big)U_{i,j}^{n+1}
      =\Big(1+\frac{\tau}{2}\delta_y^\beta\Big)V_{i,j}^{n}
      +\frac{\tau}{2}\Big(1-\frac{\tau}{2}\delta_y^\beta\Big)f_{i,j}^{n+1/2}.
\end{align}
\end{subequations}
\subsection{Stability and Convergence}
Now we consider the stability and convergence analysis for the
CN-WSGD scheme \eqref{eq:4.7}. Define the sets of the index of the
interior and boundary mesh grid points in domain $[a,b]\times[c,d]$,
respectively, as
\begin{align*}
  &\Lambda_h=\{(i,j) : 1\leq i\leq N_x-1, 1\leq j\leq N_y-1\},             \\
  &\partial\Lambda_h=\{(i,j) : i=0,N_x; 0\leq j \leq N_y\} \cup\{(i, j) : 0\leq i \leq N_x; j=0,N_y\}.
\end{align*}
For any $v=\{v_{i}\}\in V_h$, we define its pointwise maximum norm
and discrete $L^2$ norm, respectively, as follows
\begin{equation}
  \|v\|_{\infty}=\max\limits_{(i,j)\in\Lambda_h} |v_{i,j}|, \quad \|v\|=\sqrt{h^2\sum_{i=1}^{N_x-1}\sum_{j=1}^{N_y-1}v_{i,j}^2},
\end{equation}
where
\begin{equation*}
  V_h=\{v:v=\{v_{i,j}\} \text{ is a grid function in } \Lambda_h \text{~and~}v_{i,j}=0 \text{ on } \partial\Lambda_h \}.
\end{equation*}
In the following, we list some properties of Kronecker products of matrices.
\begin{lemma}[\cite{Laub:05}]\label{lem:4}
  Let $A\in\mathbb{R}^{n\times n}$ have eigenvalues $\{\lambda_i\}_{i=1}^n$, and $B\in\mathbb{R}^{m\times m}$ have eigenvalues $\{\mu_j\}_{j=1}^m$. Then the $mn$ eigenvalues of $A\otimes B$, which represents the kronecker product of matrix $A$ and $B$, are
 \begin{equation*}
    \lambda_1\mu_1,\ldots,\lambda_1\mu_m,\lambda_2\mu_1,\ldots,\lambda_2\mu_m,\ldots,\lambda_n\mu_1,\ldots,\lambda_n\mu_m.
  \end{equation*}
\end{lemma}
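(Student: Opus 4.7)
The plan is to prove the Kronecker-product eigenvalue lemma by combining the Schur decomposition with the two fundamental algebraic identities satisfied by $\otimes$, namely the mixed-product rule $(A_1 A_2) \otimes (B_1 B_2) = (A_1 \otimes B_1)(A_2 \otimes B_2)$ and the fact that $(U \otimes V)^* = U^* \otimes V^*$ together with $I_n \otimes I_m = I_{nm}$. From these one immediately gets that the Kronecker product of unitaries is unitary, and that the Kronecker product of upper-triangular matrices is upper triangular, with diagonal entries obtained by multiplying pairs of diagonal entries. That is the engine of the proof.

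First I would invoke the complex Schur decomposition to write $A = U_A T_A U_A^{*}$ and $B = U_B T_B U_B^{*}$, where $U_A, U_B$ are unitary and $T_A, T_B$ are upper triangular with the eigenvalues $\lambda_1,\ldots,\lambda_n$ and $\mu_1,\ldots,\mu_m$ respectively on their diagonals (repeated according to algebraic multiplicity). Using the mixed-product identity twice,
\begin{equation*}
A \otimes B = (U_A T_A U_A^{*}) \otimes (U_B T_B U_B^{*}) = (U_A \otimes U_B)\,(T_A \otimes T_B)\,(U_A \otimes U_B)^{*}.
\end{equation*}
Since $U_A \otimes U_B$ is unitary, this is a unitary similarity, so $A\otimes B$ and $T_A \otimes T_B$ have the same spectrum (including multiplicities).

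Next I would inspect $T_A \otimes T_B$. Viewed block by block, the $(i,k)$ block of $T_A \otimes T_B$ equals $(T_A)_{ik} T_B$, which is zero for $i>k$ and upper triangular for $i \le k$; therefore $T_A \otimes T_B$ is upper triangular. Its diagonal blocks are $(T_A)_{ii} T_B = \lambda_i T_B$, whose own diagonals are $\lambda_i \mu_1,\ldots,\lambda_i \mu_m$. Reading off the main diagonal of an upper-triangular matrix gives its eigenvalues, so the spectrum of $T_A \otimes T_B$, and hence of $A \otimes B$, is exactly the multiset $\{\lambda_i \mu_j : 1 \le i \le n,\, 1 \le j \le m\}$.

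The only genuine subtlety — and the step I would be most careful about — is the multiplicity bookkeeping. A naive eigenvector argument using $(A\otimes B)(v\otimes w) = \lambda\mu\,(v\otimes w)$ for eigenpairs $(\lambda,v)$ of $A$ and $(\mu,w)$ of $B$ exhibits the products $\lambda_i\mu_j$ as eigenvalues but gives no guarantee that no others occur and no guarantee of the correct algebraic multiplicities when $A$ or $B$ is not diagonalizable. Routing through the Schur form avoids exactly this pitfall, because the triangularization exposes all $mn$ diagonal entries at once and the unitary similarity preserves the characteristic polynomial. Thus the listed products are precisely the $mn$ eigenvalues of $A \otimes B$, finishing the proof.
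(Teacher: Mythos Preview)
Your proof via the Schur decomposition is correct and handles the multiplicity issue cleanly. The paper itself does not prove this lemma; it is stated without proof and attributed to the reference \cite{Laub:05}, so there is no in-paper argument to compare against.
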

\begin{lemma}[\cite{Laub:05}]\label{lem:5}
  Let $A\in \mathbb{R}^{m\times n}, B\in \mathbb{R}^{r\times s}, C\in \mathbb{R}^{n\times p}, D\in \mathbb{R}^{s\times t}$. Then
  \begin{equation}
    (A\otimes B)(C\otimes D)=AC\otimes BD~~(\in \mathbb{R}^{mr\times pt}).
  \end{equation}
  Moreover, if $A, B\in \mathbb{R}^{n\times n}$, $I$ is a unit matrix of order $n$, then matrices $I\otimes A$ and $B\otimes I$ commute.
\end{lemma}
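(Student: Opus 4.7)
The plan is to establish the mixed product identity $(A\otimes B)(C\otimes D)=AC\otimes BD$ by direct block-wise computation, and then to deduce the commutativity assertion as an immediate corollary of that identity.

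First, I would fix notation and recall the block structure of the Kronecker product: $A\otimes B$ is viewed as an $m\times n$ block matrix whose $(i,k)$ block is the $r\times s$ matrix $a_{ik}B$, and analogously $C\otimes D$ is an $n\times p$ block matrix whose $(k,j)$ block is the $s\times t$ matrix $c_{kj}D$. Because the inner dimension $ns$ of $A\otimes B$ matches the outer dimension of $C\otimes D$, block multiplication is legitimate, and the result has the expected size $mr\times pt$.

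The main step is then to compute the $(i,j)$ block of $(A\otimes B)(C\otimes D)$ by summing the $n$ block-products $(a_{ik}B)(c_{kj}D)$. Each summand factors as $a_{ik}c_{kj}\,BD$, a scalar times the fixed matrix $BD$, so the sum collapses to $\bigl(\sum_{k=1}^{n} a_{ik}c_{kj}\bigr)BD=(AC)_{ij}\,BD$. This is precisely the $(i,j)$ block of $AC\otimes BD$, which proves the identity. The only care required is in the index bookkeeping between scalar entries and block entries; this is the one step a careless writeup could bungle, but it is a purely mechanical check.

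Finally, the commutativity claim follows by a one-line application of the identity just proved. Taking $A=I$, $B=A$, $C=B$, $D=I$ in the identity gives $(I\otimes A)(B\otimes I)=(IB)\otimes(AI)=B\otimes A$, and swapping the roles yields $(B\otimes I)(I\otimes A)=(BI)\otimes(IA)=B\otimes A$. The two products agree, so $I\otimes A$ and $B\otimes I$ commute. I do not anticipate any real obstacle; the entire argument is combinatorial and reduces to comparing two block decompositions of the same matrix.
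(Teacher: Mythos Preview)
Your argument is correct and is the standard textbook proof of the mixed-product identity; the paper does not supply its own proof of this lemma but simply cites it from \cite{Laub:05}, so there is nothing to compare against beyond noting that your block-wise computation is exactly the proof one finds in that reference.
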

\begin{lemma}[\cite{Laub:05}]\label{lem:6}
  For all $A$ and $B$, $(A\otimes B)^{\mathrm{T}}=A^{\mathrm{T}}\otimes B^{\mathrm{T}}$.
\end{lemma}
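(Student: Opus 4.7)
The plan is to verify $(A\otimes B)^{\mathrm{T}}=A^{\mathrm{T}}\otimes B^{\mathrm{T}}$ by direct comparison of block entries from the defining formula of the Kronecker product; no spectral or estimate-type argument is required. Write $A=(a_{ij})$ of size $m\times n$ and $B$ of size $r\times s$, so that $A\otimes B$ is the $mr\times ns$ matrix whose $(i,j)$ block (itself of size $r\times s$) equals $a_{ij}B$, for $1\le i\le m$ and $1\le j\le n$.

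First I would compute the $(k,l)$ block of $(A\otimes B)^{\mathrm{T}}$. Transposing any block matrix moves the $(l,k)$ block to the $(k,l)$ position and additionally transposes that block, so the $(k,l)$ block of $(A\otimes B)^{\mathrm{T}}$ equals $(a_{lk}B)^{\mathrm{T}}=a_{lk}B^{\mathrm{T}}$. Next I would expand $A^{\mathrm{T}}\otimes B^{\mathrm{T}}$ directly from the Kronecker definition: since $A^{\mathrm{T}}$ has entries $(A^{\mathrm{T}})_{kl}=a_{lk}$, the $(k,l)$ block of $A^{\mathrm{T}}\otimes B^{\mathrm{T}}$ is $(A^{\mathrm{T}})_{kl}B^{\mathrm{T}}=a_{lk}B^{\mathrm{T}}$. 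Since these expressions agree for every index pair $(k,l)$, and the overall dimensions on both sides are $(ns)\times(mr)$, the two matrices coincide.

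There is essentially no mathematical obstacle in this lemma; it is a direct consequence of definitions and is already recorded in the reference \cite{Laub:05}. The only care needed is bookkeeping: one must transpose the outer block structure and the inner blocks simultaneously, and track that $A$ is $m\times n$ while $A^{\mathrm{T}}$ is $n\times m$ so that the two block layouts line up correctly. Once that bookkeeping is fixed, the proof is a single line of definition chasing, and it applies to arbitrary rectangular $A$ and $B$ with no size restriction beyond the matrices being well defined.
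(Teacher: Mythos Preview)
Your proof is correct. The paper does not supply its own proof of this lemma; it is simply quoted from \cite{Laub:05} as a standard identity, so there is no argument in the paper to compare against, and your blockwise verification is exactly the standard one-line justification.
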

In the theoretical analysis of the numerical method, we choose
$N=N_x=N_y$ for simplification.
\begin{theorem}\label{thm:6}
  The difference scheme \eqref{eq:4.7} is unconditionally stable for $1<\alpha,\beta\le2$.
\end{theorem}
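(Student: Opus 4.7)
The plan is to recast the CN-WSGD scheme \eqref{eq:4.7} in matrix form via Kronecker products, factor the amplification matrix into two commuting pieces corresponding to the $x$- and $y$-directions, and then bound the spectrum of each piece with the one-dimensional result in Theorem \ref{thm:4}.

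First I would order the $(N-1)^2$ interior grid values lexicographically into a single vector $\mathbf{U}^n$. Let $A_\alpha$ (respectively $A_\beta$) denote the Toeplitz matrix of the form \eqref{eq:2.9} built with WSGD weights of order $\alpha$ (respectively $\beta$) on $N-1$ points, and set
\begin{equation*}
B_x=\frac{\tau}{2h^{\alpha}}\bigl(K_1^{+}A_\alpha+K_2^{+}A_\alpha^{\mathrm T}\bigr),\qquad B_y=\frac{\tau}{2h^{\beta}}\bigl(K_1^{-}A_\beta+K_2^{-}A_\beta^{\mathrm T}\bigr).
\end{equation*}
Then $\tfrac{\tau}{2}\delta_x^{\alpha}$ acting on the grid function is represented by $I\otimes B_x$ and $\tfrac{\tau}{2}\delta_y^{\beta}$ by $B_y\otimes I$, so \eqref{eq:4.7} takes the vectorized form
\begin{equation*}
(I-I\otimes B_x)(I-B_y\otimes I)\,\mathbf{U}^{n+1}=(I+I\otimes B_x)(I+B_y\otimes I)\,\mathbf{U}^{n}+\mathrm{RHS}^{n}.
\end{equation*}

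Next I would exploit commutativity. By Lemma \ref{lem:5}, $I\otimes B_x$ and $B_y\otimes I$ commute; consequently so do the Cayley-type rational functions $M_x=(I-I\otimes B_x)^{-1}(I+I\otimes B_x)$ and $M_y=(I-B_y\otimes I)^{-1}(I+B_y\otimes I)$, and the amplification matrix equals the commuting product $M=M_xM_y$. By Lemma \ref{lem:4}, the spectra of $I\otimes B_x$ and $B_y\otimes I$ coincide with those of $B_x$ and $B_y$ respectively (each eigenvalue repeated $N-1$ times). The hypothesis $K_1^{+},K_2^{+}\ge 0$ with $(K_1^{+})^2+(K_2^{+})^2\neq 0$, and the analogue for the $y$-coefficients, is exactly the hypothesis of Theorem \ref{thm:4}, which yields $\mathrm{Re}(\lambda)<0$ for every eigenvalue $\lambda$ of $B_x$ and of $B_y$.

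For any such $\lambda$ the identity $|1+\lambda|^2-|1-\lambda|^2=4\,\mathrm{Re}(\lambda)<0$ gives $|(1+\lambda)/(1-\lambda)|<1$, so every eigenvalue of $M_x$ and of $M_y$ lies strictly inside the unit disk. Since $M_x$ and $M_y$ commute, they admit a simultaneous upper-triangularization over $\mathbb{C}$; each eigenvalue of $M_xM_y$ is therefore a product of a corresponding eigenvalue of $M_x$ with one of $M_y$, and hence has modulus strictly less than one. Consequently $\rho(M)<1$, which, exactly as in the one-dimensional proof, yields unconditional stability. The step that requires the most care is the simultaneous-triangularization argument, because $B_x$ and $B_y$ are not a priori diagonalizable, so one cannot just read off products of eigenvalues through a common eigenbasis; instead one must invoke that commuting complex matrices share a Schur triangularization, after which the product of diagonal entries gives the eigenvalues of the product. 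Every other ingredient reduces directly to Theorem \ref{thm:4} and the Kronecker-product Lemmas \ref{lem:4} and \ref{lem:5}.
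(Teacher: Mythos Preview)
Your proof is correct and follows essentially the same route as the paper: Kronecker-product vectorization, commutativity of the $x$- and $y$-block operators via Lemma~\ref{lem:5}, negativity of the real parts of the eigenvalues through Theorem~\ref{thm:4} and Lemma~\ref{lem:4}, and the Cayley bound $|(1+\lambda)/(1-\lambda)|<1$. The only difference is in the last step. The paper exploits commutativity to factor the iterated amplification as $M^n=M_y^{\,n}M_x^{\,n}$ and argues that each factor tends to zero because $\rho(M_x),\rho(M_y)<1$; this sidesteps any joint spectral statement for the product. You instead invoke a simultaneous Schur triangularization of the commuting pair to show $\rho(M_xM_y)<1$ directly. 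Both arguments are valid; the paper's factorization is a touch simpler, while your version makes explicit why non-diagonalizability is not an obstacle.
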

\begin{proof}
  We represent the discrete functions $U_{i,j}^{n}$ and $f_{i,j}^{n+1/2}$  into vector forms with
  \begin{align*}
    & U^{n}=(u_{1,1}^{n},u_{2,1}^{n},\cdots,u_{N-1,1}^{n},u_{1,2}^{n},u_{2,2}^{n},\cdots,u_{N-1,2}^{n},\cdots,u_{1,N-1}^{n},u_{2,N-1}^n,\cdots,u_{N-1,N-1}^{n})^{\mathrm{T}},      \\
    & F^{n+1/2}=(f_{1,1}^{n+1/2},f_{2,1}^{n+1/2},\cdots,f_{N-1,1}^{n+1/2},f_{1,2}^{n+1/2},f_{2,2}^{n+1/2},\cdots,f_{N-1,2}^{n+1/2},\\
    & \hspace{5cm}\cdots,f_{1,N-1}^{n+1/2},f_{2,N-1}^{n+1/2},\cdots,f_{N-1,N-1}^{n+1/2})^{\mathrm{T}},
  \end{align*}
  and denote
  \begin{equation}\label{eq:4.13}
    \mathcal{D}_x=\frac{K_1^{+}\tau}{2h^{\alpha}}I\otimes A_{\alpha}+\frac{K_2^{+}\tau}{2h^{\alpha}}I\otimes A_{\alpha}^{\mathrm{T}},  ~~~~
    \mathcal{D}_y=\frac{K_1^{-}\tau}{2h^{\beta}}A_{\beta}\otimes I+\frac{K_2^{-}\tau}{2h^{\beta}}A_{\beta}^{\mathrm{T}}\otimes I,
  \end{equation}
  where the symbol $\otimes$ denotes the Kronecker product, $I$ is the unit matrix, and matrices $A_{\alpha}$ and $A_{\beta}$ are defined in \eqref{eq:2.9} corresponding to $\alpha,\beta$, respectively.
  Therefore, the difference scheme \eqref{eq:4.7} can be expressed as
  \begin{equation}
       \big(I-\mathcal{D}_x\big)
       \big(I-\mathcal{D}_y\big)U^{n+1}
      =\big(I+\mathcal{D}_x\big)
       \big(I+\mathcal{D}_y\big)U^{n}+\tau F^{n+1/2},
  \end{equation}
  then the relationship between the error $e^{n+1}$ in $U^{n+1}$ and the error $e^{n}$ in $U^{n}$ is given by
  \begin{equation}\label{eq:4.19}
      e^{n+1}=\big(I-\mathcal{D}_y\big)^{-1}\big(I-\mathcal{D}_x\big)^{-1}\big(I+\mathcal{D}_x\big)
       \big(I+\mathcal{D}_y\big)e^{n}.
  \end{equation}Using Lemma \ref{lem:5}, we can check that $\mathcal{D}_x$ and $\mathcal{D}_y$ commute, i.e.,
  \begin{equation}\label{eq:4.9}
    \mathcal{D}_x\mathcal{D}_y=\mathcal{D}_y\mathcal{D}_x=\frac{\tau^2}{4h^{\alpha+\beta}}(K_1^{-}A_{\beta}+K_2^{-}A_{\beta}^{\mathrm{T}})\otimes
    (K_1^{+}A_{\alpha}+K_2^{+}A_{\alpha}^{\mathrm{T}}).
  \end{equation}
  Thus \eqref{eq:4.19} can be rewritten as
  \begin{equation}
      e^{n}=\Big(\big(I-\mathcal{D}_y\big)^{-1}\big(I+\mathcal{D}_y\big)\Big)^n
      \Big(\big(I-\mathcal{D}_x\big)^{-1}\big(I+\mathcal{D}_x\big)\Big)^ne^{0}.
  \end{equation}
  We can also calculate the symmetric part of $\mathcal{D}_x$ by Lemma \ref{lem:6} as
  \begin{equation*}\label{eq:4.11}
    \frac{\mathcal{D}_x+\mathcal{D}_x^{\mathrm{T}}}{2}=\frac{(K_1^{+}+K_2^{+})\tau}{2h^\alpha}I\otimes\Big(\frac{A_{\alpha}+A_{\alpha}^{\mathrm{T}}}{2}\Big),\quad
    \frac{\mathcal{D}_y+\mathcal{D}_y^{\mathrm{T}}}{2}=\frac{(K_1^{-}+K_2^{-})\tau}{2h^\beta}I\otimes\Big(\frac{A_{\beta}+A_{\beta}^{\mathrm{T}}}{2}\Big).
  \end{equation*}
  And from Theorem \ref{thm:4}, the eigenvalues of $\frac{A_{\alpha}+A_{\alpha}^{\mathrm{T}}}{2}$ and $\frac{A_{\beta}+A_{\beta}^{\mathrm{T}}}{2}$ are all negative when $1<\alpha,\beta\le2$.
  Defining $\lambda_\alpha$ and $\lambda_\beta$ as an eigenvalue of matrices $\mathcal{D}_x$ and $\mathcal{D}_y$, respectively,
  then it yields from the consequences of Lemma \ref{thm:2} and \ref{lem:4} that the real parts of $\lambda_\alpha$ and $\lambda_\beta$ are both less than zero. Since $(1+\lambda_\alpha)/(1-\lambda_\alpha)$ and $(1+\lambda_\beta)/(1-\lambda_\beta)$ are eigenvalues of matrices $(I-\mathcal{D}_x)^{-1}(I+\mathcal{D}_x)$ and $(I-\mathcal{D}_y)^{-1}(I+\mathcal{D}_y)$, respectively, thus the spectral radius of each matrix is less than 1, which follows that $\big((I-\mathcal{D}_x)^{-1}(I+\mathcal{D}_x)\big)^n$ and $\big((I-\mathcal{D}_y)^{-1}(I+\mathcal{D}_y)\big)^n$ converge to zero matrix (see Theorem 1.5 in \cite{Quarteroni:07}). Therefore the difference scheme \eqref{eq:4.7} is unconditionally stable.
\end{proof}
\begin{remark}
  For the similar reason described in Remark \ref{rem:5} and the proof of  Theorem \ref{thm:6}, we conclude that the WSGD scheme with $\theta$ weighted scheme for the time discretization for \eqref{eq:4.1} is unconditionally stable when $\frac{1}{2}\le\theta\le1$.
\end{remark}
\begin{lemma}\label{lem:11}
  Let $\mathcal{D}_x$ and $\mathcal{D}_y$ be defined in \eqref{eq:4.13}, then
  \begin{align*}
    & \|(I-\mathcal{D}_x)^{-1}(I-\mathcal{D}_y)^{-1}\|_2\le 1,   \\
    & \|(I-\mathcal{D}_\gamma)^{-1}(I+\mathcal{D}_\gamma)\|_2\le 1, ~~\gamma=x,y,
  \end{align*}
  where $\|\cdot\|_2$ denotes the 2-norm (spectral norm).
\end{lemma}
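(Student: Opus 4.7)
The plan is to exploit the dissipativity of $\mathcal{D}_x$ and $\mathcal{D}_y$ already established in the proof of Theorem~\ref{thm:6}. There it was shown that the symmetric part $\tfrac{1}{2}(\mathcal{D}_\gamma + \mathcal{D}_\gamma^{\mathrm{T}})$ for $\gamma = x,y$ equals a nonnegative scalar multiple of $I\otimes \tfrac{1}{2}(A_\alpha + A_\alpha^{\mathrm{T}})$ (resp.\ $\tfrac{1}{2}(A_\beta + A_\beta^{\mathrm{T}})\otimes I$), which by Theorem~\ref{thm:4} combined with Lemma~\ref{lem:4} has only negative eigenvalues. Hence $\mathcal{D}_\gamma + \mathcal{D}_\gamma^{\mathrm{T}} \preceq 0$ for $\gamma = x,y$. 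The whole lemma reduces to this single dissipativity fact together with two short energy identities.

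For any real matrix $M$ with $M + M^{\mathrm{T}} \preceq 0$, expand
\[
  \|(I-M)v\|_2^2 = v^{\mathrm{T}}(I-M^{\mathrm{T}})(I-M)v = \|v\|_2^2 - v^{\mathrm{T}}(M+M^{\mathrm{T}})v + \|Mv\|_2^2 \ge \|v\|_2^2,
\]
which yields $\|(I-M)^{-1}\|_2 \le 1$. Applying this with $M = \mathcal{D}_x$ and $M = \mathcal{D}_y$ and invoking submultiplicativity of the spectral norm gives
\[
  \|(I-\mathcal{D}_x)^{-1}(I-\mathcal{D}_y)^{-1}\|_2 \le \|(I-\mathcal{D}_x)^{-1}\|_2\,\|(I-\mathcal{D}_y)^{-1}\|_2 \le 1,
\]
settling the first inequality. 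For the second, the analogous identity
\[
  \|(I+M)v\|_2^2 - \|(I-M)v\|_2^2 = 2\,v^{\mathrm{T}}(M+M^{\mathrm{T}})v \le 0
\]
shows $\|(I+M)v\|_2 \le \|(I-M)v\|_2$ for every $v$. Since $I+M$ and $I-M$ are polynomials in $M$ and therefore commute, setting $w = (I-M)v$ gives $(I-M)^{-1}(I+M)w = (I+M)v$, so $\|(I-M)^{-1}(I+M)w\|_2 \le \|w\|_2$ for all $w$. Taking $M = \mathcal{D}_\gamma$ completes the proof.

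The only delicate point is justifying $\mathcal{D}_\gamma + \mathcal{D}_\gamma^{\mathrm{T}} \preceq 0$ once and for all; this is the main obstacle, but it has essentially been handled inside the proof of Theorem~\ref{thm:6}. Everything else is a two-line energy estimate, and no new spectral analysis of the fractional-derivative matrices $A_\alpha, A_\beta$ is required beyond what Theorem~\ref{thm:4} already provides.
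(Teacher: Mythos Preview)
Your proof is correct and follows essentially the same route as the paper: both invoke the negative semidefiniteness of $\mathcal{D}_\gamma+\mathcal{D}_\gamma^{\mathrm{T}}$ (established via Theorem~\ref{thm:4} and Lemma~\ref{lem:4}), derive the resolvent bound $\|(I-\mathcal{D}_\gamma)^{-1}\|_2\le 1$ from the expansion of $\|(I-\mathcal{D}_\gamma)v\|_2^2$, and handle the Cayley-transform bound by comparing $\|(I+\mathcal{D}_\gamma)v\|_2^2$ with $\|(I-\mathcal{D}_\gamma)v\|_2^2$ and then substituting $v\mapsto(I-\mathcal{D}_\gamma)^{-1}w$, using that $I\pm\mathcal{D}_\gamma$ commute. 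The only cosmetic difference is that you phrase the estimates in terms of norms while the paper writes the same inequalities as quadratic forms.
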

\begin{proof}
    From Theorem \ref{thm:4} and Lemma \ref{lem:4}, we know that $\mathcal{D}_x+\mathcal{D}_x^\mathrm{T}$ and $\mathcal{D}_y+\mathcal{D}_y^\mathrm{T}$ are negative semi-definite and symmetric matrices. Then for any $v=(v_1,v_2,\cdots,v_n)^\mathrm{T}\in \mathbb{R}^n$, we obtain that
    \begin{equation*}
    v^\mathrm{T}v\le v^\mathrm{T}(I-\mathcal{D}_\gamma^\mathrm{T})(I-\mathcal{D}_\gamma)v,~~ \gamma=x, y.
    \end{equation*}
    Substituting $v$ and $v^\mathrm{T}$ by $(I-\mathcal{D}_\gamma)^{-1}v$ and $v^\mathrm{T}(I-\mathcal{D}_\gamma^\mathrm{T})^{-1}$, respectively, for any $v\in \mathbb{R}^n$, we get
    \begin{equation*}
      v^\mathrm{T}(I-\mathcal{D}_\gamma^\mathrm{T})^{-1}(I-\mathcal{D}_\gamma)^{-1}v\le v^\mathrm{T}v, ~~ \gamma=x, y.
    \end{equation*}
    Thus, it leads to
    \begin{equation*}
        \|(I-\mathcal{D}_\gamma)^{-1}\|_2=\sup_{v\neq 0}\frac{v^\mathrm{T}(I-\mathcal{D}_\gamma^\mathrm{T})^{-1}
        (I-\mathcal{D}_\gamma)^{-1}v}{v^\mathrm{T}v}\le1,~~\gamma=x, y.
    \end{equation*}
    Connsequently,
    \begin{equation*}
    \|(I-\mathcal{D}_x)^{-1}(I-\mathcal{D}_y)^{-1}\|_2\le\|(I-\mathcal{D}_x)^{-1}\|_2\|(I-\mathcal{D}_y)^{-1}\|_2\le 1
    \end{equation*}
    holds.

    Since $\mathcal{D}_x+\mathcal{D}_x^\mathrm{T}$ and $\mathcal{D}_y+\mathcal{D}_y^\mathrm{T}$ are negative semi-definite and symmetric, for any $v\in \mathbb{R}^n$, we have
    \begin{equation*}
        v^\mathrm{T}(I+\mathcal{D}_\gamma^\mathrm{T})(I+\mathcal{D}_\gamma)v\le
        v^\mathrm{T}(I-\mathcal{D}_\gamma^\mathrm{T})(I-\mathcal{D}_\gamma)v,~~\gamma=x,y.
    \end{equation*}
    By choosing vector $(I-\mathcal{D}_\gamma)^{-1}v$, we arrive at that for any $v\in \mathbb{R}^n$,
    \begin{equation*}
        v^\mathrm{T}(I-\mathcal{D}_\gamma^\mathrm{T})^{-1}(I+\mathcal{D}_\gamma^\mathrm{T})(I+\mathcal{D}_\gamma)
        (I-\mathcal{D}_\gamma)^{-1}v\le v^\mathrm{T}v,~~\gamma=x,y.
    \end{equation*}
    As $(I-\mathcal{D}_\gamma)^{-1}(I+\mathcal{D}_\gamma)=(I+\mathcal{D}_\gamma)(I-\mathcal{D}_\gamma)^{-1}$, then it yields that
    \begin{equation*}
      \begin{split}
        \|(I-\mathcal{D}_\gamma)^{-1}(I+\mathcal{D}_\gamma)\|_2
            &=\|(I+\mathcal{D}_\gamma)(I-\mathcal{D}_\gamma)^{-1}\|_2 \\
            &=\sup_{v\neq 0}\frac{v^\mathrm{T}(I-\mathcal{D}_\gamma^\mathrm{T})^{-1}(I+\mathcal{D}_\gamma^\mathrm{T})(I+\mathcal{D}_\gamma)(I-\mathcal{D}_\gamma)^{-1}v}{v^\mathrm{T}v} \\
            &\le 1.
      \end{split}
    \end{equation*}
\end{proof}
\begin{theorem}
  Let $u_{i,j}^{n}$ be the exact solution of \eqref{eq:4.1} with $1<\alpha,\beta\le2$, and $U_{i,j}^{n}$ the solution of the difference scheme \eqref{eq:4.7}, then for all $1\leq n\leq M$, we have
  \begin{equation}
    \|u^n-U^n\|\leq c(\tau^2+h^2),
  \end{equation}
  where c denotes the positive constant and $\|\cdot\|$ stands for the discrete $L^2$-norm.
\end{theorem}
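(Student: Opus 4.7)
The plan is to derive a matrix-valued error equation, use the commutativity of $\mathcal{D}_x$ and $\mathcal{D}_y$ to factor the amplification into pieces controlled by Lemma \ref{lem:11}, and then conclude by a direct iteration. First I will set $e_{i,j}^n = u_{i,j}^n - U_{i,j}^n$ and subtract the numerical scheme \eqref{eq:4.7} from the consistency identity \eqref{eq:4.6}. Because the exact and numerical solutions share identical boundary values, all boundary-correction contributions cancel and the interior error vector satisfies
\begin{equation*}
(I-\mathcal{D}_x)(I-\mathcal{D}_y)e^{n+1} = (I+\mathcal{D}_x)(I+\mathcal{D}_y)e^n + \tau R^n,
\end{equation*}
where $R^n$ gathers $\varepsilon^n$ together with the $O(\tau^2+\tau^2 h^2)$ contribution from the Taylor-splitting remainder in \eqref{eq:4.6}; in particular $|R_{i,j}^n|\le \hat{c}(\tau^2+h^2)$ componentwise. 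Invoking \eqref{eq:4.9}, the identity $(I-\mathcal{D}_x)(I-\mathcal{D}_y)=(I-\mathcal{D}_y)(I-\mathcal{D}_x)$ is legitimate and its inverse factors as the product of the two commuting inverses, so the recursion rearranges to
\begin{equation*}
e^{n+1} = \bigl[(I-\mathcal{D}_x)^{-1}(I+\mathcal{D}_x)\bigr]\bigl[(I-\mathcal{D}_y)^{-1}(I+\mathcal{D}_y)\bigr]e^n + \tau(I-\mathcal{D}_x)^{-1}(I-\mathcal{D}_y)^{-1}R^n.
\end{equation*}

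Next I apply Lemma \ref{lem:11} to the two factors. The bound $\|(I-\mathcal{D}_\gamma)^{-1}(I+\mathcal{D}_\gamma)\|_2 \le 1$ for $\gamma=x,y$, combined with submultiplicativity of the spectral norm, shows that the amplification operator has Euclidean norm at most one, while the companion estimate $\|(I-\mathcal{D}_x)^{-1}(I-\mathcal{D}_y)^{-1}\|_2 \le 1$ controls the forcing. Passing from the Euclidean norm to the discrete $L^2$ norm via the identity $\|v\|=h\|v\|_2$ (which leaves operator norms unchanged), I obtain
\begin{equation*}
\|e^{n+1}\| \le \|e^n\| + \tau\|R^n\|,
\end{equation*}
and since $\|R^n\|\le h(N-1)\hat{c}(\tau^2+h^2)\le (b-a)\hat{c}(\tau^2+h^2)$, this yields a clean one-step increment of size $O(\tau(\tau^2+h^2))$.

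Finally, starting from $e^0=0$ and summing telescopically, I arrive at $\|e^n\|\le nC\tau(\tau^2+h^2)\le CT(\tau^2+h^2)=c(\tau^2+h^2)$, the desired estimate. No Gronwall inequality is needed because the amplification matrix is already a $2$-norm contraction. The main delicate point is the commutativity step: the product of two $2$-norm-contractive non-normal matrices is generally not contractive, so the factorization enabled by \eqref{eq:4.9} is essential, and without it one would be forced back into an energy argument mirroring the one-dimensional proof. A secondary bookkeeping point is to check that the splitting remainder of order $O(\tau^2+\tau^2 h^2)$ folded into $R^n$ does not spoil the leading $\tilde{c}(\tau^2+h^2)$ truncation bound, so that the final constant $c$ depends only on $T$, the size of the domain, and the smoothness of the exact solution.
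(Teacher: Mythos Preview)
Your proposal is correct and follows essentially the same route as the paper: derive the matrix error equation, exploit the commutativity \eqref{eq:4.9} to factor the amplification as $(I-\mathcal{D}_x)^{-1}(I+\mathcal{D}_x)\cdot(I-\mathcal{D}_y)^{-1}(I+\mathcal{D}_y)$, bound each factor and the source term via Lemma~\ref{lem:11}, and iterate. The only cosmetic difference is that the paper first expands $e^n$ as a sum involving powers $P^k$ and then bounds $\|P^k\|_2\le1$, whereas you obtain the one-step estimate $\|e^{n+1}\|\le\|e^n\|+\tau\|R^n\|$ and telescope; these are equivalent.
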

\begin{proof}
  Let $e_{i,j}^{n}=u_{i,j}^{n}-U_{i,j}^{n}$, subtracting \eqref{eq:4.6} from
  \eqref{eq:4.7} leads to
  \begin{equation}
      \big(I-\mathcal{D}_x\big)\big(I-\mathcal{D}_y\big)e^{n+1}=\big(I+\mathcal{D}_x\big)
      \big(I+\mathcal{D}_y\big)e^{n}+\tau \mathcal{E}^{n},
  \end{equation}
  where $\mathcal{D}_x$ and $\mathcal{D}_y$ are given in \ref{eq:4.13} and
  \begin{align*}
    &e=(e_{1,1},e_{2,1},\cdots,e_{N-1,1},e_{1,2},e_{2,2},\cdots,e_{N-1,2},\cdots,e_{1,N-1},e_{2,N-1},\cdots,e_{N-1,N-1})^{\mathrm{T}},    \\
    &\mathcal{E}=(\varepsilon_{1,1},\varepsilon_{2,1},\cdots,\varepsilon_{N-1,1},\varepsilon_{1,2},\varepsilon_{2,2},\cdots,\varepsilon_{N-1,2},\cdots,\varepsilon_{1,N-1},\varepsilon_{2,N-1},\cdots,\varepsilon_{N-1,N-1})^{\mathrm{T}}.
  \end{align*}
  Since $\mathcal{D}_x$ commutes with $\mathcal{D}_y$, denoting $P=\big(I-\mathcal{D}_x\big)^{-1}\big(I-\mathcal{D}_y\big)^{-1}\big(I+\mathcal{D}_x\big)\big(I+\mathcal{D}_y\big)$, it yields that
  \begin{equation}
  e^{n+1}=Pe^{n}+\tau (I-\mathcal{D}_x\big)^{-1}\big(I-\mathcal{D}_y\big)^{-1}\mathcal{E}^n.
  \end{equation}
  Iterating for all $0\le k\le n-1$ and taking the $L^2$-norm on both sides, we have that
  \begin{equation}
  \|e^{n}\|\le\tau\|(I-\mathcal{D}_x)^{-1}(I-\mathcal{D}_y)^{-1}\|_2\sum_{k=0}^{n-1}\|P^{k}\|_2\cdot\|\mathcal{E}^{n-1-k}\|\le \tau\sum_{k=0}^{n-1}\|P^{k}\|_2\cdot\|\mathcal{E}^{n-1-k}\|,
  \end{equation}
  where Lemma \ref{lem:11} shows that $\|(I-\mathcal{D}_x)^{-1}(I-\mathcal{D}_y)^{-1}\|_2\le 1$.

  Since $\mathcal{D}_x$ and $\mathcal{D}_y$ commutes, matrix $P$ can be
  rewritten as
  \begin{equation}
  P=(I-\mathcal{D}_x)^{-1}(I+\mathcal{D}_x)(I-\mathcal{D}_y)^{-1}(I+\mathcal{D}_y).
  \end{equation}
  We then obtain from Lemma \ref{lem:11} that
  \begin{equation}
     \|P\|_2\le \|(I-\mathcal{D}_x)^{-1}(I+\mathcal{D}_x)\|_2\|(I-\mathcal{D}_y)^{-1}(I+\mathcal{D}_y)\|_2\le 1.
  \end{equation}
  Then for any $1\le k \le M$, $\|P^k\|_2\le \|P\|_2^k\le 1$ holds. We can get that
 \begin{equation}
  \|e^n\|\le \tau \sum_{k=0}^{n-1}\|\mathcal{E}^k\|\le c(\tau^2+h^2).
  \end{equation}
\end{proof}
The convergence result for scheme \eqref{eq:4.10} can also be
obtained by the similar way as above.


\section{Numerical Examples}
\subsection{One Dimensional Case}
\begin{example}\label{exm:1}
  Consider the following problem
  \begin{equation}\label{eq:5.1}
    \frac{\partial u(x,t)}{\partial t}={_0}D_x^{\alpha}u(x,t)-\mathrm{e}^{-t}\big(x^{1+\alpha}+\Gamma(2+\alpha)x\big),
    \quad (x,t)\in (0,1)\times(0,1],\\
  \end{equation}
  with the boundary conditions
  \begin{equation*}
    u(0,t)=0,\ \ u(1,t)=\mathrm{e}^{-t}, \quad t\in [0, 1],
  \end{equation*}
  and initial value
  \begin{equation*}
    u(x,0)=x^{1+\alpha}, \quad x\in [0, 1].
  \end{equation*}
  Then the exact solution of \eqref{eq:5.1} is $u(x,t)=\mathrm{e}^{-t}x^{1+\alpha}$.
\end{example}
\begin{table}[h]\fontsize{10pt}{12pt}\selectfont
  \begin{center}
  \caption{The maximum and $L^2$ errors and their convergence rates to Example \ref{exm:1} approximated by the CN-WSGD scheme at $t=1$ for different $\alpha$ with $\tau=h$.}\vspace{5pt}
  \begin{tabular*}{\linewidth}{@{\extracolsep{\fill}}*{2}{r}*{8}{c}}
    \toprule
     & & \multicolumn{4}{c}{$(p,q)=(1,0)$} & \multicolumn{4}{c}{$(p,q)=(1,-1)$} \\
    \cline{3-6} \cline{7-10} \\[-8pt]
    $\alpha$ & $N$ & $\|u^n-U^n\|_{\infty}$ & rate & $\|u^n-U^n\|$ & rate
    & $\|u^n-U^n\|_{\infty}$ & rate & $\|u^n-U^n\|$ & rate\\
    \toprule
    1.1 & 16 & 6.65881E-05 & -  & 3.61993E-05 & - & 9.07705E-04 & -  & 9.88412E-05 & - \\
    & 32 & 1.54190E-05 & 2.11  & 8.91288E-06 & 2.02 & 2.28231E-04 & 1.99  & 1.69497E-05 & 2.54 \\
    & 64 & 3.59204E-06 & 2.10  & 2.20864E-06 & 2.01 & 5.54453E-05 & 2.04  & 3.18905E-06 & 2.41 \\
    & 128 & 8.38779E-07 & 2.10  & 5.50064E-07 & 2.01 & 1.32272E-05 & 2.07  & 6.62381E-07 & 2.27 \\
    & 256 & 2.07953E-07 & 2.01  & 1.37309E-07 & 2.00 & 3.12360E-06 & 2.08  & 1.49541E-07 & 2.15 \\
    & 512 & 5.19919E-08 & 2.00  & 3.43071E-08 & 2.00 & 7.33195E-07 & 2.09  & 3.55944E-08 & 2.07 \\
    \midrule
    1.5 & 16 & 6.17157E-05 & -  & 8.80121E-06 & - & 3.88221E-04 & -  & 3.91200E-05 & - \\
    & 32 & 1.25568E-05 & 2.30  & 2.30799E-06 & 1.93 & 7.85748E-05 & 2.30  & 5.04830E-06 & 2.95 \\
    & 64 & 2.47412E-06 & 2.34  & 6.07043E-07 & 1.93 & 1.54572E-05 & 2.35  & 7.43659E-07 & 2.76 \\
    & 128 & 4.76404E-07 & 2.38  & 1.56527E-07 & 1.96 & 2.97507E-06 & 2.38  & 1.49956E-07 & 2.31 \\
    & 256 & 9.01282E-08 & 2.40  & 3.97926E-08 & 1.98 & 5.62846E-07 & 2.40  & 3.72282E-08 & 2.01 \\
    & 512 & 1.93161E-08 & 2.22  & 1.00351E-08 & 1.99 & 1.05033E-07 & 2.42  & 9.60334E-09 & 1.95 \\
    \midrule
    1.9 & 16 & 1.63058E-05 & -  & 2.27814E-06 & - & 6.02603E-05 & -  & 7.78084E-06 & - \\
    & 32 & 2.49190E-06 & 2.71  & 6.49029E-07 & 1.81 & 9.23273E-06 & 2.71  & 9.04790E-07 & 3.10 \\
    & 64 & 4.93027E-07 & 2.34  & 1.81207E-07 & 1.84 & 1.35841E-06 & 2.76  & 1.49823E-07 & 2.59 \\
    & 128 & 1.27340E-07 & 1.95  & 4.81095E-08 & 1.91 & 1.94436E-07 & 2.80  & 4.02142E-08 & 1.90 \\
    & 256 & 3.23580E-08 & 1.98  & 1.24022E-08 & 1.96 & 3.06615E-08 & 2.66  & 1.12278E-08 & 1.84 \\
    & 512 & 8.15631E-09 & 1.99  & 3.14892E-09 & 1.98 & 7.93775E-09 & 1.95  & 2.99226E-09 & 1.91 \\
    \toprule
  \end{tabular*}\label{tab:1}
  \end{center}
\end{table}
\begin{example}\label{exm:2}
  Consider the following problem
  \begin{equation}\label{eq:5.2}
    \begin{split}
      & \frac{\partial u(x,t)}{\partial t}={_0}D_x^{\alpha}u(x,t)+{_x}D_1^{\alpha}u(x,t)
        +f(x,t), \quad (x,t)\in (0,1)\times(0,1],\\
      & u(0,t)=u(1,t)=0, \quad t\in [0, 1],\\
      & u(x,0)=x^3(1-x)^3, \quad x\in [0, 1],
    \end{split}
  \end{equation}
  with the source term
  \begin{equation*}
    \begin{split}
    f(x,t)=-\mathrm{e}^{-t}\Big(x^3(1-x)^3+\frac{\Gamma(4)}{\Gamma(4-\alpha)}\big(x^{3-\alpha}+(1-x)^{3-\alpha}\big)
                    -3\frac{\Gamma(5)}{\Gamma(5-\alpha)}\big(x^{4-\alpha}+(1-x)^{4-\alpha}\big)\\
                    +3\frac{\Gamma(6)}{\Gamma(6-\alpha)}\big(x^{5-\alpha}+(1-x)^{5-\alpha}\big)
                     -\frac{\Gamma(7)}{\Gamma(7-\alpha)}\big(x^{6-\alpha}+(1-x)^{6-\alpha}\big)\Big).
    \end{split}
  \end{equation*}
  By simple evaluation, the exact solution of \eqref{eq:5.2} is $u(x,t)=\mathrm{e}^{-t}x^3(1-x)^3$.
\end{example}
\begin{table}[htp]\fontsize{10pt}{12pt}\selectfont
  \begin{center}
  \caption{The maximum and $L^2$ errors and their convergence rates to Example \ref{exm:2} approximated by the CN-WSGD scheme at $t=1$ for different $\alpha$ with $\tau=h$.}\vspace{5pt}
  \begin{tabular*}{\linewidth}{@{\extracolsep{\fill}}*{2}{r}*{8}{c}}
    \toprule
     & & \multicolumn{4}{c}{$(p,q)=(1,0)$} & \multicolumn{4}{c}{$(p,q)=(1,-1)$} \\
    \cline{3-6} \cline{7-10} \\[-8pt]
    $\alpha$ & $N$ & $\|u^n-U^n\|_{\infty}$ & rate & $\|u^n-U^n\|$ & rate
    & $\|u^n-U^n\|_{\infty}$ & rate & $\|u^n-U^n\|$ & rate\\
    \toprule
    1.1 & 16 & 1.21351E-04 & -  & 6.87244E-05 & - & 1.04202E-04 & -  & 5.49761E-05 & - \\
    & 32 & 3.10400E-05 & 1.97  & 1.75798E-05 & 1.97 & 4.32767E-05 & 1.27  & 2.00595E-05 & 1.45 \\
    & 64 & 7.93983E-06 & 1.97  & 4.47207E-06 & 1.97 & 1.48399E-05 & 1.54  & 7.42486E-06 & 1.43 \\
    & 128 & 2.01674E-06 & 1.98  & 1.12995E-06 & 1.98 & 4.19788E-06 & 1.82  & 2.23601E-06 & 1.73 \\
    & 256 & 5.08051E-07 & 1.99  & 2.84150E-07 & 1.99 & 1.10967E-06 & 1.92  & 6.11319E-07 & 1.87 \\
    & 512 & 1.27511E-07 & 1.99  & 7.12580E-08 & 2.00 & 2.84899E-07 & 1.96  & 1.59692E-07 & 1.94 \\
    \midrule
    1.5 & 16 & 2.03009E-04 & -  & 5.46438E-05 & - & 2.99388E-04 & -  & 8.57787E-05 & - \\
    & 32 & 4.52559E-05 & 2.17  & 1.37190E-05 & 1.99 & 7.90624E-05 & 1.92  & 2.31127E-05 & 1.89 \\
    & 64 & 1.13225E-05 & 2.00  & 3.45401E-06 & 1.99 & 2.01483E-05 & 1.97  & 6.01008E-06 & 1.94 \\
    & 128 & 2.83579E-06 & 2.00  & 8.67756E-07 & 1.99 & 5.08147E-06 & 1.99  & 1.53528E-06 & 1.97 \\
    & 256 & 7.09655E-07 & 2.00  & 2.17555E-07 & 2.00 & 1.27542E-06 & 1.99  & 3.88274E-07 & 1.98 \\
    & 512 & 1.77509E-07 & 2.00  & 5.44715E-08 & 2.00 & 3.19447E-07 & 2.00  & 9.76542E-08 & 1.99 \\
    \midrule
    1.9 & 16 & 2.02959E-04 & -  & 3.60448E-05 & - & 2.35899E-04 & -  & 4.37067E-05 & - \\
    & 32 & 4.57927E-05 & 2.15  & 8.97441E-06 & 2.01 & 5.44882E-05 & 2.11  & 1.10506E-05 & 1.98 \\
    & 64 & 9.36312E-06 & 2.29  & 2.23928E-06 & 2.00 & 1.13848E-05 & 2.26  & 2.77301E-06 & 1.99 \\
    & 128 & 2.03859E-06 & 2.20  & 5.59714E-07 & 2.00 & 2.55286E-06 & 2.16  & 6.94607E-07 & 2.00 \\
    & 256 & 5.08948E-07 & 2.00  & 1.39944E-07 & 2.00 & 6.35234E-07 & 2.01  & 1.73827E-07 & 2.00 \\
    & 512 & 1.27160E-07 & 2.00  & 3.49898E-08 & 2.00 & 1.58420E-07 & 2.00  & 4.34792E-08 & 2.00 \\
    \toprule
  \end{tabular*}\label{tab:2}
  \end{center}
\end{table}
\begin{example}\label{exm:3}
  Consider the following variable coefficients problem
  \begin{equation}\label{eq:5.3}
    \begin{split}
      & \frac{\partial u(x,t)}{\partial t}=x^\alpha{_0}D_x^{\alpha}u(x,t)+(1-x)^\alpha{_x}D_1^{\alpha}u(x,t)
        +f(x,t), \quad (x,t)\in (0,1)\times(0,1],\\
      & u(0,t)=u(1,t)=0, \quad t\in [0, 1],\\
      & u(x,0)=x^3(1-x)^3, \quad x\in [0, 1],
    \end{split}
  \end{equation}
  with the source term
  \begin{equation*}
    \begin{split}
    f(x,t)=-\mathrm{e}^{-t}\Big(x^3(1-x)^3+\frac{\Gamma(4)}{\Gamma(4-\alpha)}\big(x^{3}+(1-x)^{3}\big)
                    -3\frac{\Gamma(5)}{\Gamma(5-\alpha)}\big(x^{4}+(1-x)^{4}\big)\\
                    +3\frac{\Gamma(6)}{\Gamma(6-\alpha)}\big(x^{5}+(1-x)^{5}\big)
                     -\frac{\Gamma(7)}{\Gamma(7-\alpha)}\big(x^{6}+(1-x)^{6}\big)\Big).
    \end{split}
  \end{equation*}
  By simple evaluation, the exact solution of \eqref{eq:5.3} is $u(x,t)=\mathrm{e}^{-t}x^3(1-x)^3$.
\end{example}
\begin{table}[htp]\fontsize{10pt}{12pt}\selectfont
  \begin{center}
  \caption{The maximum and $L^2$ errors and their convergence rates to Example \ref{exm:3} approximated by the CN-WSGD scheme at $t=1$ for different $\alpha$ with $\tau=h$.}\vspace{5pt}
  \begin{tabular*}{\linewidth}{@{\extracolsep{\fill}}*{2}{r}*{8}{c}}
    \toprule
     & & \multicolumn{4}{c}{$(p,q)=(1,0)$} & \multicolumn{4}{c}{$(p,q)=(1,-1)$} \\
    \cline{3-6} \cline{7-10} \\[-8pt]
    $\alpha$ & $N$ & $\|u^n-U^n\|_{\infty}$ & rate & $\|u^n-U^n\|$ & rate
    & $\|u^n-U^n\|_{\infty}$ & rate & $\|u^n-U^n\|$ & rate\\
    \toprule
    1.1 & 16 & 1.77123E-04 & -  & 7.32001E-05 & - & 3.95613E-04 & -  & 1.92219E-04 & - \\
    & 32 & 4.47870E-05 & 1.98  & 1.76184E-05 & 2.05 & 9.75763E-05 & 2.02  & 4.11452E-05 & 2.22 \\
    & 64 & 1.08962E-05 & 2.04  & 4.36356E-06 & 2.01 & 2.43654E-05 & 2.00  & 1.00363E-05 & 2.04 \\
    & 128 & 2.66784E-06 & 2.03  & 1.08906E-06 & 2.00 & 6.10991E-06 & 2.00  & 2.51523E-06 & 2.00 \\
    & 256 & 6.67126E-07 & 2.00  & 2.72235E-07 & 2.00 & 1.53026E-06 & 2.00  & 6.31764E-07 & 1.99 \\
    \midrule
    1.5 & 16 & 1.88510E-04 & -  & 6.18902E-05 & - & 3.56874E-04 & -  & 1.30433E-04 & - \\
    & 32 & 4.48741E-05 & 2.07  & 1.46628E-05 & 2.08 & 8.32954E-05 & 2.10  & 2.80619E-05 & 2.22 \\
    & 64 & 1.10524E-05 & 2.02  & 3.61334E-06 & 2.02 & 2.02076E-05 & 2.04  & 6.65178E-06 & 2.08 \\
    & 128 & 2.74933E-06 & 2.01  & 8.99424E-07 & 2.01 & 4.98975E-06 & 2.02  & 1.63398E-06 & 2.03 \\
    & 256 & 6.86120E-07 & 2.00  & 2.24518E-07 & 2.00 & 1.24092E-06 & 2.01  & 4.05976E-07 & 2.01 \\
    \midrule
    1.9 & 16 & 1.61881E-04 & -  & 4.02897E-05 & - & 1.79407E-04 & -  & 5.75044E-05 & - \\
    & 32 & 3.43080E-05 & 2.24  & 9.58213E-06 & 2.07 & 4.06728E-05 & 2.14  & 1.27751E-05 & 2.17 \\
    & 64 & 7.72475E-06 & 2.15  & 2.35289E-06 & 2.03 & 9.30708E-06 & 2.13  & 3.02268E-06 & 2.08 \\
    & 128 & 1.91676E-06 & 2.01  & 5.83977E-07 & 2.01 & 2.34573E-06 & 1.99  & 7.37420E-07 & 2.04 \\
    & 256 & 4.80573E-07 & 2.00  & 1.45527E-07 & 2.00 & 5.92611E-07 & 1.98  & 1.82315E-07 & 2.02 \\
    \toprule
  \end{tabular*}\label{tab:3}
  \end{center}
\end{table}
\subsection{Two Dimensional Case}
\begin{example}\label{exm:4}
  The following fractional diffusion problem
  \begin{equation*}
      \frac{\partial u(x,y,t)}{\partial t}={_0}D_x^{1.2}u(x,y,t)+{_x}D_1^{1.2}u(x,y,t)
        +{_0}D_y^{1.8}u(x,y,t)+{_y}D_1^{1.8}u(x,y,t)+f(x,y,t)
  \end{equation*}
  is considered in the domain $\Omega=(0,1)^2$ and $t>0$ with boundary conditions $u(x,y,t)|_{\partial\Omega}=0$ and the initial condition $u(x,y,0)=x^3(1-x)^3y^3(1-y)^3$, where the source term
  \begin{equation*}
    \begin{split}
    f(x,y,t)=-\mathrm{e}^{-t}\Big[&\Big(x^3(1-x)^3y^3(1-y)^3\Big)
              +\Big(\frac{\Gamma(4)}{\Gamma(2.8)}\big(x^{1.8}+(1-x)^{1.8}\big)
              -\frac{3\Gamma(5)}{\Gamma(3.8)}\big(x^{2.8}+(1-x)^{2.8}\big)\\
             &+\frac{3\Gamma(6)}{\Gamma(4.8)}\big(x^{3.8}+(1-x)^{3.8}\big)
              -\frac{\Gamma(7)}{\Gamma(5.8)}\big(x^{4.8}+(1-x)^{4.8}\big)\Big)y^3(1-y)^3\\
             &+\Big(\frac{\Gamma(4)}{\Gamma(2.2)}\big(y^{1.2}+(1-y)^{1.2}\big)
              -\frac{3\Gamma(5)}{\Gamma(3.2)}\big(y^{2.2}+(1-y)^{2.2}\big)\\
             &+\frac{3\Gamma(6)}{\Gamma(4.2)}\big(y^{3.2}+(1-y)^{3.2}\big)
              -\frac{\Gamma(7)}{\Gamma(5.2)}\big(y^{4.2}+(1-y)^{4.2}\big)\Big)x^3(1-x)^3\Big].
    \end{split}
  \end{equation*}
  Then the exact solution of the fractional partial differential equation is $u(x,y,t)=\mathrm{e}^{-t}x^3(1-x)^3y^3(1-y)^3$.
\end{example}
We use four numerical schemes: LOD \eqref{eq:4.15}, PR-ADI \eqref{eq:4.16}, Douglas-ADI
\eqref{eq:4.17} and D'yakonov-ADI \eqref{eq:4.18}, to simulate Example \ref{exm:4}, the
maximum and $L^2$ errors and their convergence rates to Example \ref{exm:4} approximated
at $t=1$ are listed in \autoref{tab:4}, where $N=N_x=N_y$, and $p,~q$ are the shifted
numbers of the WSGD operators. From the numerical results, three ADI schemes obtain more
accurate solution than the LOD scheme, and it also reflects that the three ADI schemes
are equivalent in two dimensional case.
\begin{table}[htp]\fontsize{10pt}{12pt}\selectfont
  \begin{center}
  \caption{The maximum and $L^2$ errors and their convergence rates to Example \ref{exm:4} approximated at $t=1$ with $\tau=h$.}\vspace{5pt}
  \begin{tabular*}{\linewidth}{@{\extracolsep{\fill}}*{1}{l}*{1}{r}*{8}{c}}
    \toprule
     & & \multicolumn{4}{c}{$(p,q)=(1,0)$} & \multicolumn{4}{c}{$(p,q)=(1,-1)$} \\
    \cline{3-6} \cline{7-10} \\[-8pt]
    Scheme & $N$ & $\|u^n-U^n\|_{\infty}$ & ratio & $\|u^n-U^n\|$ & ratio & $\|u^n-U^n\|_{\infty}$ & ratio & $\|u^n-U^n\|$ & ratio\\
    \toprule
    & 8 & 4.49810E-05 & - & 1.36781E-05 & - & 4.81859E-05 & - & 1.50257E-05 & - \\
    & 16 & 1.16951E-05 & 1.94  & 3.68935E-06 & 1.89 & 1.21720E-05 & 1.99  & 3.77002E-06 & 1.99 \\
    LOD & 32 & 2.94559E-06 & 1.99  & 9.40245E-07 & 1.97 & 3.11386E-06 & 1.97  & 9.74178E-07 & 1.95 \\
    & 64 & 7.36186E-07 & 2.00  & 2.36472E-07 & 1.99 & 7.84850E-07 & 1.99  & 2.47973E-07 & 1.97 \\
    & 128 & 1.83637E-07 & 2.00  & 5.92494E-08 & 2.00 & 1.96486E-07 & 2.00  & 6.25130E-08 & 1.99 \\
    \midrule
    & 8 & 6.43195E-06 & - & 1.95007E-06 & - & 6.44770E-06 & - & 2.05016E-06 & - \\
    & 16 & 1.54712E-06 & 2.06  & 4.84833E-07 & 2.01 & 2.04790E-06 & 1.65  & 6.06100E-07 & 1.76 \\
    PR-ADI & 32 & 3.83522E-07 & 2.01  & 1.21460E-07 & 2.00 & 5.56723E-07 & 1.88  & 1.69028E-07 & 1.84 \\
    & 64 & 9.57751E-08 & 2.00  & 3.04854E-08 & 1.99 & 1.44070E-07 & 1.95  & 4.50482E-08 & 1.91 \\
    & 128 & 2.39462E-08 & 2.00  & 7.64237E-09 & 2.00 & 3.65748E-08 & 1.98  & 1.16567E-08 & 1.95 \\
    \midrule
    & 8 & 6.43195E-06 & - & 1.95007E-06 & - & 6.44770E-06 & - & 2.05016E-06 & - \\
    Douglas-& 16 & 1.54712E-06 & 2.06  & 4.84833E-07 & 2.01 & 2.04790E-06 & 1.65  & 6.06100E-07 & 1.76 \\
    ADI & 32 & 3.83522E-07 & 2.01  & 1.21460E-07 & 2.00 & 5.56723E-07 & 1.88  & 1.69028E-07 & 1.84 \\
    & 64 & 9.57751E-08 & 2.00  & 3.04854E-08 & 1.99 & 1.44070E-07 & 1.95  & 4.50482E-08 & 1.91 \\
    & 128 & 2.39462E-08 & 2.00  & 7.64237E-09 & 2.00 & 3.65748E-08 & 1.98  & 1.16567E-08 & 1.95 \\
    \midrule
    & 8 & 6.43195E-06 & - & 1.95007E-06 & - & 6.44770E-06 & - & 2.05016E-06 & - \\
    D'yakonov-& 16 & 1.54712E-06 & 2.06  & 4.84833E-07 & 2.01 & 2.04790E-06 & 1.65  & 6.06100E-07 & 1.76 \\
    ADI & 32 & 3.83522E-07 & 2.01  & 1.21460E-07 & 2.00 & 5.56723E-07 & 1.88  & 1.69028E-07 & 1.84 \\
    & 64 & 9.57751E-08 & 2.00  & 3.04854E-08 & 1.99 & 1.44070E-07 & 1.95  & 4.50482E-08 & 1.91 \\
    & 128 & 2.39462E-08 & 2.00  & 7.64237E-09 & 2.00 & 3.65748E-08 & 1.98  & 1.16567E-08 & 1.95 \\
    \toprule
  \end{tabular*}\label{tab:4}
  \end{center}
\end{table}
\section{Conclusion}
The paper provides the novel second order approximations for
fractional derivatives, called the weighted and shifted Gr\"{u}nwald
difference operator; it also suggests a direction to gain higher
order discretization and compact schemes of fractional derivatives.
The discretizations are used to solve one and two dimensional space
fractional diffusion equations; several numerical schemes are
designed, their effectiveness are theoretically proved and
numerically verified.

\section{Acknowledgements}
The authors thank Prof Yujiang Wu for his constant encouragement and
support. This work was supported by the Program for New Century
Excellent Talents in University under Grant No. NCET-09-0438, the
National Natural Science Foundation of China under Grant No.
10801067, and the Fundamental Research Funds for the Central
Universities under Grant No. lzujbky-2010-63 and No.
lzujbky-2012-k26.

\addcontentsline{toc}{section}{References}

\end{document}